\documentclass[11pt,a4paper]{article}
\pdfoutput=1 

\newcommand{\op}[1]{\textcolor{red}{OP: #1}}

\usepackage{epsf,epsfig,amsfonts,amsgen,amsmath,amstext,amsbsy,amsopn,amsthm,cases,listings,color
}
\usepackage{ebezier,eepic}
\usepackage{color}
\usepackage{multirow}
\usepackage{epstopdf}
\usepackage{graphicx}
\usepackage{pgf,tikz}
\usepackage{mathrsfs}
\usepackage[marginal]{footmisc}
\usepackage{enumitem}
\usepackage[titletoc]{appendix}
\usepackage{booktabs}
\usepackage{url}
\usepackage{relsize}
\usepackage{mathtools}
\usepackage{comment}
\usepackage{pgfplots}
\usepackage{array}
\usepackage{authblk}
\usepackage{amssymb}
\usepackage{float}
\usepackage{wasysym}

\usepackage{empheq}

\usepackage{dsfont}

\usepackage{tikz}
\usepackage{longtable}
\usepackage{subfigure}
\pgfplotsset{compat=1.17}
\usepackage{mathrsfs}
\usepackage{wasysym}
\usetikzlibrary{arrows}
\usepackage{aligned-overset}
\usepackage{bm}
\usepackage{bbm}
\usepackage[T1]{fontenc}
\usepackage[backref=page]{hyperref}

\allowdisplaybreaks[1]

\definecolor{uuuuuu}{rgb}{0.27,0.27,0.27}
\definecolor{sqsqsq}{rgb}{0.1255,0.1255,0.1255}

\setlength{\textwidth}{150mm} \setlength{\oddsidemargin}{7mm}
\setlength{\evensidemargin}{7mm} \setlength{\topmargin}{-5mm}
\setlength{\textheight}{245mm} \topmargin -18mm

\newtheorem{definition}{Definition} [section]
\newtheorem{theorem}[definition]{Theorem}
\newtheorem{lemma}[definition]{Lemma}

\newtheorem{claim}[definition]{Claim}
\newtheorem{problem}[definition]{Problem}

\newtheorem{fact}[definition]{Fact}



\newcommand{\vol}{\operatorname{vol}}
\newcommand{\dist}{\operatorname{dist}}



\setlength{\parskip}{2mm}
\setlength{\parindent}{0pt}
\newcommand{\Mod}[1]{\ \mathrm{mod}\ #1}
\newcommand{\hide}[1]{}

\begin{document}
\title{\bf\Large New upper bound for lattice covering by spheres}
\date{\today}
\author[1]{Jun Gao\thanks{Research supported by ERC Advanced Grant 101020255. Email: \texttt{gj950211@gmail.com}}}
\author[2]{Xizhi Liu\thanks{Research supported by ERC Advanced Grant 101020255 and the Excellent Young Talents Program (Overseas) of the National Natural Science Foundation of China. Email: \texttt{liuxizhi@ustc.edu.cn}}}
\author[1]{Oleg Pikhurko\thanks{Research supported by ERC Advanced Grant 101020255. Email: \texttt{o.pikhurko@warwick.ac.uk}}}
\author[1]{Shumin Sun\thanks{Research supported by ERC Advanced Grant 101020255. Email: \texttt{Shumin.Sun@warwick.ac.uk}}%
}
\affil[1]{Mathematics Institute and DIMAP,
            University of Warwick,
            Coventry, UK}
\affil[2]{School of Mathematical Sciences, 
            USTC,
            Hefei, 230026, China}
%
\maketitle
\begin{abstract}
We show that there exists a lattice covering of $\mathbb{R}^n$ by Eucledian spheres of equal radius  with density $O\big(n \ln^{\beta} n \big)$ as $n\to\infty$, where 
\begin{align*}
    \beta \coloneqq \frac{1}{2} \log_2 \left(\frac{8 \pi \mathrm{e}}{3\sqrt 3}\right)=1.85837...\,.
\end{align*}
This improves upon the previously best known upper bound by Rogers from 1959 of $O\big(n \ln^{\alpha} n \big)$, where $\alpha \coloneqq \frac{1}{2} \log_{2}(2\pi \mathrm{e})=2.0471...$\,.  
    



\end{abstract}
\section{Introduction}
Given $n$, we would like to cover the  entire space $\mathbb{R}^{n}$ by placing spheres\footnote{Throughout this work, we adopt the convention that `sphere' means a closed Euclidean ball.} of the same radius $r$ at each element of a lattice $\Lambda$, that is, we require that 
\begin{align}\label{equ:lattice-covering-def}
   \Lambda + B_{r}^{n}= \mathbb{R}^{n},
\end{align}
 where  $B_{r}^{n}:=\{\mathbf{x}\in \mathbb{R}^{n}\colon \|\mathbf{x}\|_2\le r\}$ denotes the Euclidean sphere of radius $r$  in $\mathbb{R}^n$ centred at the origin and $X+Y\coloneqq \left\{ \mathbf{x} + \mathbf{y}\colon \text{$\mathbf{x} \in X$ and $\mathbf{y} \in Y$} \right\}$ denotes the \emph{sum} of two sets $X,Y\subseteq \mathbb{R}^{n}$.
 We call any such pair $(\Lambda, B_{r}^{n})$  a \emph{(sphere) lattice covering} of $\mathbb{R}^{n}$ and define its \emph{density}  as 
\begin{align*}
    \Theta(\Lambda, B_{r}^{n}) 
    \coloneqq \frac{\vol(B^n_r)}{|\det(\Lambda)|},
\end{align*}
where $\vol(B^n_r)$ denotes the volume of $B^n_r$ and $\det(\Lambda)$ is the \emph{determinant} of $\Lambda$ which can be defined as
\begin{align*}
    \det(\Lambda)
    \coloneqq \det\left[\mathbf{b}_{1}, \ldots, \mathbf{b}_{n}\right], 
\end{align*}
the determinant of the matrix made of some (equivalently, any) linearly independent vectors\footnote{Unless otherwise specified, all vectors in this work are considered column vectors.} $\mathbf{b}_{1}, \ldots, \mathbf{b}_{n}  \in \mathbb{R}^{n}$ that \emph{generate} the lattice $\Lambda$, i.e., satisfy
\begin{align}\label{eq:generate}
    \Lambda
    = \left\{\lambda_1 \mathbf{b}_{1} + \cdots + \lambda_n \mathbf{b}_{n} \colon \text{$\lambda_i \in \mathbb{Z}$ for $i \in [n]$}\right\},  
\end{align}
where $[n]:=\{1,\ldots,n\}$.
\hide{\op{I feel that it is rather unusual for a paper using linear algebra, to use row vectors. Can we change to column vectors, please?}}
The \emph{covering density} of $\Lambda$ is then defined as 
\begin{align*}
    \Theta(\Lambda)
    \coloneqq \min_{r \ge 0}\left\{\Theta(\Lambda, B_{r}^{n})  \colon \mathbb{R}^{n} 
    = \Lambda + B_{r}^{n} \right\}. 
\end{align*}
The classical \emph{lattice covering problem}, a central topic in combinatorial geometry (see e.g. books~\cite{Rog64,GL87,CS99book}), asks for the \emph{optimal lattice covering density} in dimension $n$, defined as 
\begin{align*}
    \Theta_n
    \coloneqq \inf\left\{ \Theta(\Lambda) \colon \text{$\Lambda \subseteq \mathbb{R}^n$ is a lattice} \right\}.
\end{align*}
Determining $\Theta_n$ seems  a very difficult problem, with exact values known only for $n\le 5$ (see~\cite{Ker39,Bam54,Bar56,Few56,DR63,RB78}) and with many questions (such as, for example, whether the Leech lattice is optimal) being still open. Various lower and upper bounds for $\Theta_n$ were obtained in a large number of works, starting with the classical papers~\cite{BD52,Dav52,ER53,59R,CFR59} from the 1950s; we refer the reader to the papers~\cite{SV06,EdelsbrunnerKerber11} that contain  overviews of more recent results. 

\hide{OP: I removed it since our paper does not focus on small dimensions
\setlength{\tabcolsep}{15pt}
\setlength{\arrayrulewidth}{0.4mm}
\begin{table}[h!]
\centering
\renewcommand{\arraystretch}{1.5} 
\begin{tabular}{c|c|>{\centering\arraybackslash}p{5.5cm}}
\hline
$n$ & $\Theta_n$ & References \\
\hline
2 & ${2\pi}/{3\sqrt{3}} = 1.2092\ldots $ & Kershner~\cite{Ker39} \\
\hline
3 & ${5\sqrt{5}\pi}/{24} = 1.4635\ldots $ & Bambah~\cite{Bam54}, Barnes~\cite{Bar56}, Few~\cite{Few56} \\
\hline
4 & ${2\pi^2}/{5\sqrt{5}} = 1.76553\ldots $ & Delone--Ry{\v s}kov~\cite{DR63} \\
\hline
5 & ${245\sqrt{35}\pi^2}/{3888\sqrt{3}} = 2.12429\ldots $ & Ry{\v s}kov--Baranovski{\u i}~\cite{RB78} \\
\hline
\end{tabular}
\caption{Known exact values of $\Theta_n$.}
\end{table}
}

In general, for any convex body $K \subseteq \mathbb{R}^n$, one can similarly define the optimal lattice covering density $\Theta_{n,K}$ of $K$ (see e.g.~\cite{Rog64} for details). Improving upon Rogers’~\cite{59R} upper bound $\Theta_{n,K} = O\left(n^{\log_2 \ln n + O(1)}\right)$ from 1959, a recent breakthrough by Ordentlich--Regev--Weiss~\cite{ORW22} shows that $\Theta_{n,K} = O(n^2)$ holds universally for all convex bodies $K \subseteq \mathbb{R}^n$.

However, in perhaps the most fundamental case when $K$ is the sphere, this general upper bound does not improve upon Rogers’ other result from the same paper~\cite{59R} that $\Theta_n = O\left(n \ln^{\alpha} n\right)$, where $\alpha \coloneqq \tfrac{1}{2} \log_2(2\pi \mathrm{e}) = 2.0471 ...$\,.

In this work, we establish the following upper bound for $\Theta_n$, improving upon the above mentioned bound of Rogers~\cite{59R}.
\begin{theorem}\label{THM:main-improve-Rogers}
    There exists a constant $C$ such that for every integer $n\ge 1$, it holds that
    \begin{align*}
        \Theta_n
        \le C n \ln^{\beta} n,
        \quad\text{where}\quad 
        \beta \coloneqq \frac{1}{2} \log_2 \left(\frac{8 \pi \mathrm{e}}{3\sqrt 3}\right)
        = 1.85837...\,. 
    \end{align*}
\end{theorem}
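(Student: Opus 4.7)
The plan is to refine Rogers' 1959 probabilistic lattice construction by treating two of the $n$ coordinates via the hexagonal $A_2$ structure rather than the implicit $\mathbb{Z}^2$ structure, exploiting that $A_2$ realizes the optimal planar covering density $\Theta_2 = 2\pi/(3\sqrt{3})$, strictly less than the covering density $\pi/2$ achieved by $\mathbb{Z}^2$.

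First, I would revisit Rogers' framework: sample a random full-rank lattice $\Lambda \subseteq \mathbb{R}^n$ of determinant $d$ via the Siegel mean value theorem and show that, for $r$ chosen so that $\vol(B_r^n)/d \ge c \cdot n \ln^{\alpha} n$, one has $\Lambda + B_r^n = \mathbb{R}^n$ with positive probability. The exponent $\alpha = \tfrac{1}{2}\log_2(2\pi \mathrm{e})$ originates from Stirling's formula $\vol(B_r^n) \sim (2\pi \mathrm{e} r^2/n)^{n/2}$ together with a discretization step in which the role of the last two coordinates is played effectively by $\mathbb{Z}^2$, contributing the planar covering density $\pi/2 = (2\pi\mathrm{e})/(4\mathrm{e})$.

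Second, I would swap out the $\mathbb{Z}^2$ role with the hexagonal $A_2$ lattice. Concretely, identifying $\mathbb{R}^n = \mathbb{R}^{n-2} \oplus \mathbb{R}^2$ and $\mathbb{R}^2 \cong \mathbb{C}$ with $A_2 \cong \mathbb{Z}[\omega]$ (where $\omega = \mathrm{e}^{\pi \mathrm{i}/3}$), I would construct a random lattice $\Lambda$ whose interaction with the $\mathbb{R}^2$-factor is governed by a rescaled $A_2$-structure, while the remaining fibers are sampled in the standard Rogers fashion; equivalently, one can sample random $\mathbb{Z}[\omega]$-module lattices of rank $n/2$ in $\mathbb{C}^{n/2}$. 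This substitution refines the discretization by the two-dimensional gain $\Theta_2/(\pi/2) = 4/(3\sqrt{3})$, so Stirling's factor $2\pi \mathrm{e} = 4\mathrm{e} \cdot (\pi/2)$ is replaced by $4\mathrm{e} \cdot \Theta_2 = 8\pi \mathrm{e}/(3\sqrt{3})$ inside the union bound. Carrying this through the asymptotics yields the desired $\Theta_n = O(n \ln^{\beta} n)$ with $\beta = \tfrac{1}{2}\log_2(8\pi\mathrm{e}/(3\sqrt{3}))$. The odd-$n$ case is handled by adjoining one extra $\mathbb{R}$-direction to an $(n-1)$-dimensional Eisenstein construction, affecting only the constant $C$.

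The main obstacle will be establishing the appropriate analogue of Siegel's mean value theorem for the modified $A_2$-flavoured ensemble, so that the expected number of lattice points in a measurable $S \subseteq \mathbb{R}^n$ behaves as $\vol(S)/d$, and then tracking the gain $4/(3\sqrt{3})$ through Rogers' refined counting argument so that it appears precisely at the step where the planar covering density of $\mathbb{Z}^2$ originally entered, rather than being cancelled or absorbed into lower-order constants. A related subtlety is to ensure that fixing the $A_2$-structure on two coordinates does not overconstrain the ensemble, so that the resulting random lattices still cover $\mathbb{R}^n$ uniformly (as opposed to degenerating toward a direct product like $A_2 \oplus (\text{Rogers lattice})$, whose covering radius would be governed by the worse of the two factors).
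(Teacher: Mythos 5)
Your instinct to swap the square structure for the hexagonal $A_2$ lattice in a two-dimensional factor is exactly the source of the paper's gain, and you land on the correct exponent $\beta = \tfrac12\log_2(8\pi\mathrm e/(3\sqrt3))$. But the mechanism you describe does not match the actual one, and the mismatch hides a genuine gap. Rogers' 1959 proof is not a one-shot Siegel-mean-value / random lattice argument: it constructs an almost-covering in dimension $n-k$ (his Lemma~2) and then iterates a \emph{dimension-raising} step $\log_2\ln n$ times, raising the ambient dimension by one each time and \emph{squaring} the uncovered density $\delta\mapsto O(\delta^2)$ (his Lemma~3), before a final $(1+1/n)$-expansion closes the cover (his Lemma~4). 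The ``$2\pi\mathrm e$'' in $\alpha$ is purely Stirling for the sphere volume $\nu_n\sim(2\pi\mathrm e)^{n/2}/\sqrt{\pi n}$; no planar covering density $\pi/2$ ever appears in that proof, so your factorisation $2\pi\mathrm e = 4\mathrm e\cdot(\pi/2)$ with ``$\pi/2$ coming from an implicit $\mathbb Z^2$'' is a numerological reverse-engineering of the final answer rather than a description of a step you could modify.

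The idea you are missing is the notion of a \emph{robust} lattice covering: a pair $(\Lambda_d,B_r^d)$ such that \emph{every} ball of radius $r$ contains a full fundamental parallelepiped of $\Lambda_d$ (hence all $2^d$ of its vertices). This is what powers the generalised iteration step (Lemma~\ref{LEMMA:cartesian-product-covering}): going from $\mathbb R^n$ to $\mathbb R^{n+d}$ by forming $K\times B_r^d$ and adjoining basis vectors $(\mathbf y_j,\mathbf b_j)$ with random $\mathbf y_j\in\mathbb T^n$, the uncovered density drops to $O(\delta^{2^d})$ precisely because every fibre over $\mathbf w\in\mathbb R^d$ sees $2^d$ essentially independent translates of $\Lambda+K$, indexed by the vertices of a parallelepiped of $\Lambda_d$ inside $B_r^d(\mathbf w)$. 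Consequently the quantity that enters the exponent is $\nu_d/\tilde\Theta_d$, where $\tilde\Theta_d$ is the \emph{robust} covering density, not the ordinary $\Theta_d$. For $A_2$ you need radius $2/\sqrt3$ (twice the covering radius $1/\sqrt3$) to guarantee a fundamental parallelogram in every ball, giving $\tilde\Theta_2\le 8\pi/(3\sqrt3)=4\Theta_2$; for $\mathbb Z^2$ one likewise has $\tilde\Theta_2=\nu_2=2\pi=4\cdot(\pi/2)$. The factor $4$ cancels in the ratio, which is why your $\Theta_2/(\pi/2)$ heuristic reproduces the right number for $d=2$ --- but this is a two-dimensional coincidence, and without the robust-covering concept your sketch has no way to identify which quantity actually controls the gain, nor any mechanism (squaring via random translations over the vertices of a parallelepiped) that would make the $A_2$ structure do any work. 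Your worry about degenerating to a direct product is real and is resolved in the paper exactly by the random $\mathbf y_j$-shifts, something your Siegel-mean-value framing doesn't naturally produce.
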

Let us remark that the factor $n$ in Theorem~\ref{THM:main-improve-Rogers} is necessary, as shown by Coxeter--Few--Rogers~\cite{CFR59} who proved that $\Theta_n \ge \left(\mathrm{e}^{-3/2} + o(1)\right)n$, improving upon earlier results of Bambah--Davenport~\cite{BD52} and  Erd\H{o}s--Rogers~\cite{ER53}.

Another obstacle to improving the upper bound of $\Theta_n$ is that, even when the condition that $\Lambda \subseteq \mathbb{R}^n$ is a lattice is removed and arbitrary sphere coverings of $\mathbb{R}^n$ are allowed, the best known asymptotic upper bound still has order $n \ln n$ (see e.g.~\cite{57R,62ER,Rog63,BW03,07D}).

Theorem~\ref{THM:main-improve-Rogers} follows relatively quickly from the following more general theorem, which provides a general strategy for proving upper bounds on $\Theta_n$. To state the result, we first introduce some necessary definitions.

Given a point $\mathbf{x} \in \mathbb{R}^n$ and $n$ linearly independent vectors $\mathbf{b}_{1}, \ldots, \mathbf{b}_{n} \in \mathbb{R}^{n}$, the \emph{parallelepiped} $\mathrm{P}=\mathrm{P}_{\mathbf{x}}$ starting at $\mathbf{x} \in \mathbb{R}^{n}$ and generated by $\{\mathbf{b}_{1}, \ldots, \mathbf{b}_{n}\}$ is defined as the convex hull of
\begin{align*}
    V_{\mathbf{x}}(\mathbf{b}_{1}, \ldots, \mathbf{b}_{n})
    \coloneqq \left\{ \mathbf{x} + \lambda_{1} \mathbf{b}_{1} + \cdots + \lambda_{n} \mathbf{b}_{n} \colon \text{$\lambda_i \in \{0,1\}$ for $i \in [n]$} \right\}.
\end{align*}
 Trivially, $V_{\mathbf{x}}(\mathbf{b}_{1}, \ldots, \mathbf{b}_{n})$ is exactly the set of the vertices of the polytope $\mathrm{P}$ and we will refer to this set as $V(\mathrm{P})$.
We say a parallelepiped $\mathrm{P} \subseteq \mathbb{R}^{n}$ is a \emph{$\Lambda$-parallelepiped} if $V(\mathrm{P}) \subseteq \Lambda$. 
If, in addition, $\vol(\mathrm{P}) = |\mathrm{det}(\Lambda)|$, then $\mathrm{P}$ is called a \emph{fundamental parallelepiped} of~$\Lambda$. For example, any set of vectors that generates $\Lambda$ as in~\eqref{eq:generate} produces a fundamental parallelepiped.

The following concept will be crucial for our result. 

\begin{definition}[Robust lattice covering]\label{DEF:robust-covering}
    Let $d \ge 1$ be an integer and $r \ge 0$ be a real number. 
    A lattice covering $(\Lambda, B_{r}^{d})$ of $\mathbb{R}^{d}$ is \emph{robust} if every closed ball of radius $r$ in $\mathbb{R}^{d}$ contains a fundamental parallelepiped of $\Lambda$.
\end{definition}

Extending the definition of $\Theta_n$, we define the \emph{optimal robust lattice covering density} of $\mathbb{R}^{n}$ as 
\begin{align*}
    \tilde{\Theta}_n
    \coloneqq \inf \{ \Theta(\Lambda, B_{r}^{n}) \colon \text{$(\Lambda, B_{r}^{n})$ is a robust lattice covering of $\mathbb{R}^n$} \}.
\end{align*}

For every integer $d\ge 1$, define 
\begin{align*}
    \nu_{d}
    \coloneqq \vol\left(B_{\sqrt{d}}^{d}\right)
    = \frac{(\pi d)^{\frac{d}{2}}}{\Gamma\left( \frac{d}{2}+1 \right)}, 
\end{align*}
where $\Gamma$ denotes the gamma function.
%

The following result provides an asymptotic upper bound for $\Theta_n$ in terms of $\tilde{\Theta}_d$. 
\begin{theorem}\label{THM:general-parallelepiped}
    For every integer $d \ge 1$, there exists a constant $C_{\ref{THM:general-parallelepiped}} = C_{\ref{THM:general-parallelepiped}}(d)$ such that for $n \ge d$, 
    \begin{align*}
        \Theta_n 
        \le C_{\ref{THM:general-parallelepiped}} n \ln^{\gamma} n, 
        \quad\text{where}\quad 
        \gamma = \gamma_{d} \coloneqq \frac{1}{2}\log_{2}(2\pi \mathrm{e}) - \frac{1}{d} \log_{2}\left({\nu_{d}}/{\tilde{\Theta}_d}\right). 
    \end{align*}
\end{theorem}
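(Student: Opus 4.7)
The plan is to combine a near-optimal $d$-dimensional robust lattice covering with a Rogers-type random lift into the remaining $n-d$ coordinates, using the $2^d$ vertices of each fundamental parallelepiped as pairwise-independent ``chances'' to save a factor of order $2^d$ in the probabilistic covering analysis. Concretely, fix $\varepsilon>0$ and a robust covering $(\Lambda_0,B_{r_0}^d)$ of $\mathbb{R}^d$ of density at most $(1+\varepsilon)\tilde{\Theta}_d$; rescaling to $r_0=\sqrt{d}$ gives $\det(\Lambda_0)\ge(1-O(\varepsilon))\,\nu_d/\tilde{\Theta}_d$. Let $\mathbf{b}_1,\dots,\mathbf{b}_d$ be a basis of $\Lambda_0$ and let $M>0$ be a scale parameter of order $\sqrt{\ln n}$. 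Drawing $\mathbf{t}_1,\dots,\mathbf{t}_d$ iid uniformly from $[0,M)^{n-d}$, consider the random lattice $\Lambda_{\mathbf{t}}\subseteq\mathbb{R}^n=\mathbb{R}^d\oplus\mathbb{R}^{n-d}$ generated by the vectors $(\mathbf{b}_i,\mathbf{t}_i)$ for $i\in[d]$ together with $(\mathbf{0},M\mathbf{e}_j)$ for $j\in[n-d]$; it has determinant $\det(\Lambda_0)\,M^{n-d}$ and projects to $\Lambda_0$ on the first $d$ coordinates.

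For any target $\mathbf{x}=(\mathbf{x}_1,\mathbf{x}_2)\in\mathbb{R}^n$, the robust covering property supplies a fundamental parallelepiped $P\subseteq B_{\sqrt{d}}^d(\mathbf{x}_1)$ of $\Lambda_0$ with $2^d$ vertices $\mathbf{v}_S=\mathbf{v}_*+\sum_{i\in S}\mathbf{b}_i^{(P)}$ indexed by $S\subseteq[d]$, where $\mathbf{b}_1^{(P)},\dots,\mathbf{b}_d^{(P)}$ is some basis of $\Lambda_0$. The lift homomorphism $\phi\colon\Lambda_0\to\mathbb{R}^{n-d}/M\mathbb{Z}^{n-d}$ sending $\mathbf{b}_i\mapsto\mathbf{t}_i$ satisfies: since $\{\mathbf{b}_i^{(P)}\}$ and $\{\mathbf{b}_i\}$ differ by a unimodular integer matrix, the values $\phi(\mathbf{b}_i^{(P)})$ remain iid uniform on the torus; hence for $S\ne S'$ the difference $\phi(\mathbf{v}_S)-\phi(\mathbf{v}_{S'})=\sum_{i\in S\triangle S'}\pm\phi(\mathbf{b}_i^{(P)})$ is a non-trivial sum of iid uniforms, and a short Fourier computation upgrades this to pairwise independence of the $2^d$ values $\{\phi(\mathbf{v}_S)\}_{S\subseteq[d]}$. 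A second-moment estimate then gives
\begin{align*}
\Pr\bigl[\exists S\colon\bigl(\mathbf{v}_S,\phi(\mathbf{v}_S)\bigr)\in B_R^n(\mathbf{x})\bigr]\ge\frac{2^d\,p}{1+2^d\,p},\quad\text{where}\quad p\coloneqq\frac{\vol\bigl(B_{\sqrt{R^2-d}}^{n-d}\bigr)}{M^{n-d}}.
\end{align*}

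Integrating this lower bound over $\mathbf{x}$ in a fundamental domain of $\Lambda_{\mathbf{t}}$ bounds the expected uncovered volume by $\det(\Lambda_{\mathbf{t}})/(1+2^d p)$; to turn ``small expected uncovered volume'' into a genuine covering I would apply Rogers' iterative refinement, either adding $O(1)$ further random generators (each shrinking the uncovered volume by a constant factor in expectation) or enlarging $R$ by a vanishing additive term to absorb the residual open set. Optimising $M$ and $R$ in Rogers' style so that $2^d p=\Theta(\ln n)$ and $R\sim\sqrt n\,(1+o(1))$, and combining the factor $2^d$ saving with $\det(\Lambda_0)\ge(1-o(1))\nu_d/\tilde{\Theta}_d$, reduces the log-exponent of the density $\vol(B_R^n)/\det(\Lambda_{\mathbf{t}})$ from Rogers' $\alpha=\tfrac12\log_2(2\pi\mathrm{e})$ down to $\gamma_d=\alpha-\tfrac1d\log_2(\nu_d/\tilde{\Theta}_d)$.

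\textbf{Main obstacle.} The delicate step is making the factor-$2^d$ gain survive the iterative conversion into a genuine covering and land on the precise $(1/d)\log_2(\nu_d/\tilde{\Theta}_d)$ reduction claimed. The pairwise-only (rather than mutual) independence forces Chebyshev in place of Chernoff bounds, pinning the useful saturation point at $2^d p\sim 1$ and making the analysis sensitive to the exact choices of $M$, $R$, and the number of refinement steps; carefully balancing these so that the gain is not absorbed into the logarithmic overheads is the principal quantitative challenge.
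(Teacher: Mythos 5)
Your plan is a one-shot random lift: fix a robust covering $\Lambda_0\subset\mathbb R^d$ and lift it to $\mathbb R^n$ by attaching $d$ iid random translations in the remaining $n-d$ coordinates, then use pairwise independence of the $2^d$ lifted vertices of a fundamental parallelepiped together with a second-moment bound. This is not the structure of the paper's proof, and as described it has a genuine gap.

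The exponent $\gamma_d=\tfrac12\log_2(2\pi\mathrm e)-\tfrac1d\log_2(\nu_d/\tilde\Theta_d)$ does not come from a single step; it is accumulated over $k\approx\tfrac1d\log_2\ln n$ dimension-increase steps. In the paper, one first builds a near-covering of $\mathbb R^{n-kd}$ with exponentially small uncovered density $\delta_0$ via Rogers' Lemma~2, and then applies the key product lemma (the paper's Lemma~\ref{LEMMA:cartesian-product-covering}) $k$ times: each application adjoins $d$ new random coordinates, takes the Cartesian product with a $d$-ball of volume $D$ from the robust covering, and replaces the uncovered density $\delta$ by $O(\delta^{2^d})$ — not by $O(\delta/2^d)$. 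After $k$ iterations this drives the uncovered density below $(n^n+1)^{-1}$, so Rogers' expansion lemma applies, and the resulting body $K_0\times(B_r^d)^k$ fits in a ball with a volume deficit factor $\nu_{n-kd}\nu_d^k/\nu_n$. The covering density then collects a factor $(D/\nu_d)(2\pi\mathrm e)^{d/2}$ per iteration, i.e. $(D/\nu_d)^k(2\pi\mathrm e)^{kd/2}\approx\ln^{\gamma_d}n$; this is precisely where the $\tfrac1d\log_2(\nu_d/\tilde\Theta_d)$ deficit emerges.

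Your one-shot construction has no such iteration; the pairwise-independence / Chebyshev bound caps the per-point coverage probability at a constant $\tfrac{2^dp}{1+2^dp}<1$, and the suggested repair (``adding $O(1)$ further random generators, each shrinking uncovered volume by a constant factor'' or ``enlarging $R$ by a vanishing additive term'') cannot push the uncovered density down to the $(n^n+1)^{-1}$ level required by Rogers' completion step: you would need $\Theta(n\ln n)$ constant-factor reductions, which would wreck the determinant or the radius. The proposed parameter regime is also internally inconsistent: with $M=\Theta(\sqrt{\ln n})$ and $R\sim\sqrt n$, the quantity $p=\vol(B^{\,n-d}_{\sqrt{R^2-d}})/M^{\,n-d}\approx(2\pi\mathrm e/\ln n)^{(n-d)/2}$ is exponentially small in $n$, so $2^dp=\Theta(\ln n)$ is not attainable for large $n$. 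Finally, the $2^d$ gain from pairwise independence is a one-time constant factor, whereas the claim needs a $(\nu_d/\tilde\Theta_d)^{\frac1d\log_2\ln n}$ gain; nothing in the proposal explains how a bounded saving compounds into that exponent. Your intuition that the $2^d$ vertices give a qualitatively stronger event than a single vertex is the right one, but the correct quantitative exploitation is \emph{iterated squaring of the uncovered density over $\log\log n$ rounds}, not a second-moment bound in a single round — and that iteration is absent from the plan.
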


It is straightforward to verify that $\left( \mathbb{Z}^d, B^{d}_{\sqrt{d}} \right)$ is a robust lattice covering of $\mathbb{R}^{d}$ with density~$\nu_{d}$ for any $d\ge 1$.
Hence, $\Tilde{\Theta}_{d} \le \nu_{d}$.
Thus that $\gamma_{d} \le \frac{1}{2}\log_{2}(2\pi \mathrm{e})$, which recovers Rogers' bound by Theorem~\ref{THM:general-parallelepiped} (applied with any chosen $d\ge 1$).

Theorem~\ref{THM:main-improve-Rogers} follows immediately from Theorem~\ref{THM:general-parallelepiped} and the following upper bound for $\Tilde{\Theta}_{2}$. 
\begin{lemma}\label{lm:robust-covering-R2}
    There exists a robust lattice covering of $\mathbb{R}^{2}$ with density $\frac{8 \pi}{3\sqrt{3}}$. In particular, 
    \begin{align*}
        \tilde{\Theta}_2 
        \le \frac{8 \pi}{3\sqrt{3}}. 
    \end{align*}
\end{lemma}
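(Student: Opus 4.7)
The plan is to construct the robust covering explicitly using the hexagonal lattice. I will take $\Lambda \subseteq \mathbb{R}^{2}$ generated by $\mathbf{b}_1 \coloneqq (1,0)$ and $\mathbf{b}_2 \coloneqq (1/2,\sqrt{3}/2)$, so that $|\det\Lambda| = \sqrt{3}/2$, and set $r \coloneqq 2/\sqrt{3}$. A direct calculation then yields $\Theta(\Lambda, B_r^2) = \pi r^2 / |\det\Lambda| = 8\pi/(3\sqrt{3})$; and since the covering radius of $\Lambda$ equals $1/\sqrt{3} < r$, the pair $(\Lambda, B_r^2)$ is already a covering of $\mathbb{R}^{2}$. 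The nontrivial content of the lemma is thus the robustness condition: for every $\mathbf{c}\in\mathbb{R}^{2}$, some fundamental parallelepiped of $\Lambda$ must lie inside $B_r(\mathbf{c})$.

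By lattice translations combined with the $D_6$ point-symmetry of $\Lambda$, it suffices to verify this for $\mathbf{c}$ in the closed equilateral triangle $T \coloneqq \mathrm{conv}\{\mathbf{0},\mathbf{b}_1,\mathbf{b}_2\}$. For such $\mathbf{c}$ I plan to test three candidate fundamental parallelepipeds $P_1, P_2, P_3$ with respective vertex sets $\{\mathbf{0},\mathbf{b}_1,\mathbf{b}_2, \mathbf{q}_i\}$, where
\[
\mathbf{q}_1 \coloneqq \mathbf{b}_1+\mathbf{b}_2,\qquad \mathbf{q}_2 \coloneqq \mathbf{b}_2-\mathbf{b}_1,\qquad \mathbf{q}_3 \coloneqq \mathbf{b}_1-\mathbf{b}_2;
\]
each $P_i$ is easily checked to be a fundamental parallelepiped, since its edges form a $\mathbb{Z}$-basis of $\Lambda$. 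For any $\mathbf{c}\in T$, writing $\mathbf{c}$ in barycentric coordinates over the vertices of $T$ and applying the triangle inequality gives $\|\mathbf{c}-\mathbf{v}\|\le 1 < r$ for each $\mathbf{v}\in\{\mathbf{0},\mathbf{b}_1,\mathbf{b}_2\}$, so robustness reduces to the single inequality $\min_{i\in\{1,2,3\}}\|\mathbf{c}-\mathbf{q}_i\|\le r$ for every $\mathbf{c}\in T$.

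This last inequality will be the main obstacle. The key geometric observation is that $\mathbf{q}_1,\mathbf{q}_2,\mathbf{q}_3$ themselves form an equilateral triangle whose circumcentre is the centroid $\mathbf{g}$ of $T$ and whose circumradius equals exactly $r$, so the bound is tight precisely at $\mathbf{g}$. I plan to establish the inequality by analysing the continuous function $f(\mathbf{c}) \coloneqq \min_i\|\mathbf{c}-\mathbf{q}_i\|$ on the compact triangle $T$. On each edge of $T$, a short parametric computation shows that the ``opposite'' $\mathbf{q}_i$ satisfies $\|\mathbf{c}-\mathbf{q}_i\|\le 1$ along the whole edge, so $f\le 1 < r$ on $\partial T$. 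In the interior of $T$, at a point where the minimum is uniquely attained the relevant nonzero unit gradient of $\|\mathbf{c}-\mathbf{q}_i\|$ rules out a local maximum; at a point where exactly two distances tie, the two corresponding unit gradients cannot be antiparallel (otherwise $\mathbf{c}$ would be the midpoint of some segment $\mathbf{q}_i\mathbf{q}_j$, but these three midpoints are precisely the boundary vertices $\mathbf{0},\mathbf{b}_1,\mathbf{b}_2$ of $T$); and at the unique triple-tie point, the three unit gradients $(\mathbf{c}-\mathbf{q}_i)/\|\mathbf{c}-\mathbf{q}_i\|$ are at mutual $120^\circ$ and sum to zero, forbidding any simultaneous increase of all three distances. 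Hence the unique interior local maximum of $f$ is $\mathbf{g}$, with $f(\mathbf{g}) = r$, so $f\le r$ throughout $T$, completing the proof.
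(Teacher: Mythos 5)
Your proof is correct and uses essentially the same construction as the paper: the hexagonal lattice generated by $(1,0)^{t}$ and $(1/2,\sqrt{3}/2)^{t}$ with $r=2/\sqrt{3}$, a reduction by the lattice symmetry to a single unit equilateral triangle, and the same three candidate fundamental parallelograms, each sharing the triangle's three vertices with a fourth vertex at one of $\mathbf{b}_1+\mathbf{b}_2$, $\mathbf{b}_2-\mathbf{b}_1$, or $\mathbf{b}_1-\mathbf{b}_2$ (your configuration is the image of the paper's $OAEB$, $DABE$, $GBAE$ under the point reflection through the barycentre of the paper's triangle $ABE$). The only real divergence is in how the containment is verified: the paper partitions its triangle into three sub-triangles through the centroid and checks one parallelogram per sub-triangle, whereas you fix all three vertices of $T$ cheaply by the triangle inequality and then analyse the piecewise-smooth function $f(\mathbf{c})=\min_i\|\mathbf{c}-\mathbf{q}_i\|$ via its critical points, proving its maximum over $T$ is exactly $r$ at the centroid. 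Both verifications are valid and locate the same tight configuration; yours is somewhat more analytic (and has the small bonus of making visible that the bound is tight and that $r$ cannot be decreased for this lattice), while the paper's is a shorter combinatorial decomposition.
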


\begin{proof}[Proof of Lemma~\ref{lm:robust-covering-R2}]
    Let 
    \begin{align*}
        \mathbf{v}_1 \coloneqq (1,0)^{t}, \quad 
        \mathbf{v_2} \coloneqq \big( 1/2, \sqrt{3}/2 \big)^{t}, 
        \quad\text{and}\quad 
        r \coloneqq 2/\sqrt{3}, 
    \end{align*}
    where $\mathbf{v}^t$ denotes the transposition of a vector~$\mathbf{v}$.
    Let $\Lambda \subseteq \mathbb{R}^{2}$ denote the lattice generated by $\{\mathbf{v}_1, \mathbf{v}_{2}\}$ (see Figure~\ref{fig:robust-covering-R2}). 
    We claim that $\left(\Lambda, B^2_r\right)$ is a robust lattice covering of $\mathbb{R}^{2}$.
    By definition, it amounts to showing that for every point $\mathbf{w} \in \mathbb{R}^2$, the sphere $B_{r}^{2}(\mathbf{w})$ of radius $r$ centred at $\mathbf{w}$ contains a fundamental parallelepiped of $\Lambda$.
    By symmetry, it suffices to prove this statement for all points $\mathbf{w}$ contained in the equilateral triangle $\triangle_{ABE}$ shown in Figure~\ref{fig:robust-covering-R2}.

\newcommand\parallelogram{
	\mathord{\text{
			\tikz[baseline]
			\draw (0,.1ex) -- (.8em,.1ex) -- (1em,1.6ex) -- (.2em,1.6ex) -- cycle;}}}
\begin{figure}[ht]
\centering
\begin{tikzpicture}[scale=1]

  \path[opacity=0] (-2,-2) rectangle (2,2);

  \def\ax{1}          
  \def\ay{0}          
  \def\bx{0.5}        
  \def\by{0.866}      
  \def\radius{1.1547} 

\foreach \i in {-1,0,1,2,3,4}{
\foreach \j in {-1,0}{
\pgfmathsetmacro\x{\i*\ax + \j*\bx}
\pgfmathsetmacro\y{\i*\ay + \j*\by}
\fill (\x,\y) circle (1pt);
}
}
   \foreach \i in {-2,-1,0,1,2,3}{
    \foreach \j in {1,2}{
      \pgfmathsetmacro\x{\i*\ax + \j*\bx}
      \pgfmathsetmacro\y{\i*\ay + \j*\by}
      \fill (\x,\y) circle (1pt);
    }
  }
     \foreach \i in {-3,-2,-1,0,1,2}{
    \foreach \j in {3}{
      \pgfmathsetmacro\x{\i*\ax + \j*\bx}
      \pgfmathsetmacro\y{\i*\ay + \j*\by}
      \fill (\x,\y) circle (1pt);
    }
  }

  \draw[cyan,thick] (0,0) circle (\radius);
  \draw[cyan,thick] (2,0) circle (\radius);
  \draw[cyan,thick] (1, 2*\by) circle (\radius);


\draw[gray, thick] (0,0) -- (\bx,\by); 
\draw[gray, thick] (0,0) -- (\ax,\ay); 
\draw[gray, thick] (\bx,\by) -- (\ax,\ay); 
\draw[gray, thick] (\ax+\bx, \ay+\by) -- (\bx,\by); 
\draw[gray, thick] (\ax+\bx, \ay+\by) -- (2*\ax, 2*\ay); 
\draw[gray, thick] (\ax,\ay) -- (2*\ax, 2*\ay); 
\draw[gray, thick] (\ax,\ay) -- (\ax+\bx, \ay+\by); 
\draw[gray, thick] (\bx,\by) -- (2*\bx, 2*\by); 
\draw[gray, thick] (\ax+\bx, \ay+\by) -- (2*\bx, 2*\by); 

\fill (0,0) circle (0.06cm);
\fill (\ax,\ay) circle (0.06cm);
\fill (\bx,\by) circle (0.06cm);
\fill (2*\ax,2*\ay) circle (0.06cm);
\fill (\ax+\bx,\ay+\by) circle (0.06cm);
\fill (2*\bx,2*\by) circle (0.06cm);
\fill[magenta] (\ax,\radius/2) circle (0.06cm);

\node[below] at (0, 0) {\footnotesize $O$};
\node[below] at (\ax-0.03, \ay+0.03) {\footnotesize $A$};
\node[above] at (\bx-0.15, \by-0.1) {\footnotesize $B$};
\node[below] at (2*\ax, 2*\ay) {\footnotesize $D$};
\node[above] at (\ax+\bx+0.22, \ay+\by-0.1) {\footnotesize $E$};
\node[above] at (2*\bx, 2*\by) {\footnotesize $G$};
\node[below] at (\ax, \radius/2) {\footnotesize $X$};
\end{tikzpicture}
\caption{\hide{\op{The points $C,F$ and bold lines at them are irrelevant. Please re-draw the figure.}} The lattice generated by 
         $\mathbf v_1=(1,0)^{t}$ and 
         $\mathbf v_2=\bigl(1/2,\sqrt3/{2}\bigr)^{t}$, three balls of radius $2/\sqrt3$ centred at the origin $O$, $D=2\mathbf v_1$ and $G= 2\mathbf v_2$, and three different fundamental parallelepipeds $OAEB, DABE, GBAE$. The point $X$ is the centre of the triangle $ABE$.}
\label{fig:robust-covering-R2}
\end{figure}

    Let $X$ denote the centre of $\triangle_{ABE}$.  
    It is easy to see that
    \begin{itemize}
        \item if $\mathbf{w} \in \triangle_{AXB}$, then the ball $B_{r}^{2}(\mathbf{w})$ contains the fundamental parallelepiped $\parallelogram_{OAEB}$; 
        \item if $\mathbf{w} \in \triangle_{AXE}$, then the ball $B_{r}^{2}(\mathbf{w})$ contains the fundamental parallelepiped $\parallelogram_{DABE}$; 
        \item if $\mathbf{w} \in \triangle_{BXE}$, then the ball $B_{r}^{2}(\mathbf{w})$ contains the fundamental parallelepiped $\parallelogram_{GBAE}$.
    \end{itemize}
    Therefore, $\left(\Lambda, B^2_r\right)$ is a robust lattice covering of $\mathbb{R}^{2}$.
    The covering density of $(\Lambda, B^2_r)$ is 
    \begin{align*}
        \frac{\vol(B^2_r)}{\det(\Lambda)} 
        = \frac{\left(2/\sqrt{3}\right)^2 \pi}{\sqrt{3}/2} 
        = \frac{8\pi}{3\sqrt{3}}, 
    \end{align*}
    which completes the proof of Lemma~\ref{lm:robust-covering-R2}. 
\end{proof}

In the next section, we present the proof of Theorem~\ref{THM:general-parallelepiped}, assuming a key lemma (Lemma~\ref{LEMMA:cartesian-product-covering}) whose proof is deferred to Section~\ref{SEC:proof-lemma-cartesian-product-covering}.
We include some concluding remarks
in Section~\ref{SEC:remarks}.

\section{Proof of Theorem~\ref{THM:general-parallelepiped}}\label{SEC:proof-mian}
In this section, we present the proof of Theorem~\ref{THM:general-parallelepiped}.
We begin by listing some auxiliary results from Rogers' earlier work~\cite{59R}.

Given a lattice $\Lambda \subseteq \mathbb{R}^{n}$ and a convex body $K \subseteq \mathbb{R}^{n}$, let $\bar{\rho}(\Lambda + K)$ denote the density of the points in $\mathbb{R}^{n}$ that are not covered by $\Lambda + K$. 
\begin{lemma}[{\cite[Lemma~2]{59R}}]\label{LEMMA:almost-cover-Rogers}
    There exist constants $N_{\ref{LEMMA:almost-cover-Rogers}}$ and $C_{\ref{LEMMA:almost-cover-Rogers}}$ such that the following holds for every $n \ge N_{\ref{LEMMA:almost-cover-Rogers}}$. 
    For every convex body $K \subseteq \mathbb{R}^n$, there exists a lattice $\Lambda \subseteq \mathbb{R}^{n}$ with $\det(\Lambda) = {\vol(K)}/{\eta_n}$, where $\eta_n 
        \coloneqq \frac{n}{4} \ln\left( \frac{27}{16} \right) - 3 \ln n$, such that 
    \begin{align}\label{equ:eta-n-def}
        \bar{\rho}(\Lambda + K)
        \le C_{\ref{LEMMA:almost-cover-Rogers}} n^3 \left( \frac{16}{27} \right)^{n/4}.
    \end{align}
\end{lemma}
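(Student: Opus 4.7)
The plan is to establish the lemma via a probabilistic averaging over random lattices of determinant $D=\vol(K)/\eta_n$, following Rogers' original 1959 strategy. The first key observation is that the stated upper bound can be rewritten as
\[
    C\, n^{3} \left(16/27\right)^{n/4} = C \exp(-\eta_n),
\]
so the exponent $\eta_n$ is exactly what one expects from a Poisson heuristic: for a random lattice of determinant $D$ and any fixed $\mathbf{x}$, the expected number of lattice points in the translate $\mathbf{x}-K$ is $\vol(K)/D=\eta_n$, and were this count Poisson-distributed with that mean, the probability of it being zero --- i.e.\ of $\mathbf{x}\notin \Lambda+K$ --- would be $\exp(-\eta_n)$.

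To make this rigorous, I would consider a suitably chosen distribution on lattices of determinant $D$ (for instance a scaled version of the Siegel measure on $\mathrm{SL}_n(\mathbb{R})/\mathrm{SL}_n(\mathbb{Z})$), combined with a uniform random translation so that the resulting random affine lattice is translation-invariant in distribution. Under such a distribution one has $\EE\big[\bar{\rho}(\Lambda+K)\big] = \PP\big[\mathbf{0}\notin \Lambda+K\big]$, and it suffices to upper-bound this probability by $C\exp(-\eta_n)$; existence of a deterministic lattice achieving the bound then follows from Markov's inequality. To obtain exponential decay rather than the trivial $1/\eta_n$ coming from a first-moment estimate, one invokes the higher-moment mean-value identities of Rogers and Siegel: the expectation of $\binom{|\Lambda\cap(-K)|}{k}$ decomposes into an Euclidean integral of $\mathbf{1}_{(-K)^{k}}$ plus correction terms coming from $k$-tuples lying on proper sublattices. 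A truncated inclusion--exclusion on these moments, with the truncation order optimized around $\eta_n$, produces the Poisson-type bound $C\exp(-\eta_n)$, the polynomial $n^{3}$ absorbing the error from the truncation --- equivalently captured by the logarithmic shift $-3\ln n$ inside $\eta_n$.

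The main obstacle I anticipate is the quantitative control of the correction terms in the Rogers mean-value formula: the contributions from $k$-tuples of lattice points lying on proper sublattices must be extracted and bounded uniformly in $k$, and it is precisely the dominant balance between the main Euclidean term and these corrections that pins down the constant $27/16$ in the exponent. Since Lemma~\ref{LEMMA:almost-cover-Rogers} is used here as a classical result of Rogers, this delicate optimization is the one already carried out in~\cite{59R}, and we plan to cite it directly rather than reproduce it.
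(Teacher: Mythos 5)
The paper does not prove this statement: it is Rogers' own Lemma~2 from~\cite{59R}, imported verbatim as a black-box citation. Your proposal reaches the same endpoint --- you declare that you will ``cite it directly rather than reproduce it'' --- so at the level of what goes into the text you and the authors agree.

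A few cautions on the speculative sketch, in case you mean to present it as background. Your identity $C\,n^3(16/27)^{n/4} = C\exp(-\eta_n)$ is correct, and the Poisson heuristic is a sensible reading of the exponent, but a bare Poisson heuristic would ``predict'' $\exp(-\eta)$ for \emph{any} density $\eta$; it does not explain why $\eta_n$ is pinned to the very specific value $\tfrac{n}{4}\ln\tfrac{27}{16}-3\ln n$, which is the actual content of Rogers' estimate. The claim that the $n^{3}$ factor ``absorbs the truncation error --- equivalently captured by $-3\ln n$ inside $\eta_n$'' is circular, since $n^{3} = \exp(3\ln n)$ is literally the same statement twice rather than an explanation; the polynomial is an output of Rogers' quantitative argument, not a free bookkeeping allowance. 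And I would not assert without a check that the 1959 argument runs through the Haar/Siegel measure on $\mathrm{SL}_n(\mathbb{R})/\mathrm{SL}_n(\mathbb{Z})$ and Rogers' higher-moment mean-value formulae followed by a truncated inclusion--exclusion: the 1959 note is very short and, to my understanding, rests on Rogers' earlier Minkowski--Hlawka-style averaging over an explicit finite family of lattices (his 1958 paper on the Minkowski--Hlawka theorem for coverings), a related but distinct mechanism from the one you outline. Since the lemma is cited rather than reproved, none of this affects the paper; it only concerns whether your sketch is a trustworthy portrait of~\cite{59R}.
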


\begin{lemma}[{\cite[Lemma~4]{59R}}]\label{LEMMA:expand-Rogers}
    Let $K \subseteq \mathbb{R}^n$ be a convex body and $\Lambda \subseteq \mathbb{R}^{n}$ be a lattice. Suppose that $\bar{\rho}(\Lambda + K) \le (n^n+1)^{-1}$. 
    Then $(\Lambda, (1+1/n)K)$ is a lattice covering of $\mathbb{R}^{n}$, that is, $\Lambda + (1+1/n)K=\mathbb{R}^{n}$. 
\end{lemma}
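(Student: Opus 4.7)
The plan is to argue by contradiction and push the entire picture down to the torus $T:=\mathbb{R}^n/\Lambda$, where a volume chain combined with a connectedness argument will produce the contradiction.

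Suppose, for the sake of contradiction, that some $\mathbf{x}\in\mathbb{R}^n$ fails to lie in $\Lambda+(1+1/n)K$, and set $U:=\mathbb{R}^n\setminus(\Lambda+K)$. The first (and essentially only) use of the convexity of $K$ is to show that $\mathbf{x}-\tfrac{1}{n}K\subseteq U$: if instead $\mathbf{x}-\tfrac{1}{n}\mathbf{k}=\lambda+\mathbf{k}'$ for some $\lambda\in\Lambda$ and $\mathbf{k},\mathbf{k}'\in K$, then
\[
\mathbf{x}=\lambda+\tfrac{n+1}{n}\Bigl(\tfrac{n}{n+1}\mathbf{k}'+\tfrac{1}{n+1}\mathbf{k}\Bigr)\in\Lambda+(1+1/n)K
\]
by convexity, contradicting the assumption. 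Let $\pi:\mathbb{R}^n\to T$ denote the quotient map, $\bar K:=\pi(K)$, and $\bar U:=T\setminus \bar K=\pi(U)$. The hypothesis reads $\vol(\bar U)\le(n^n+1)^{-1}\det(\Lambda)$, equivalently $\vol(\bar K)\ge\tfrac{n^n}{n^n+1}\det(\Lambda)$, while the inclusion above together with the $\Lambda$-periodicity of $U$ yields $\pi(\mathbf{x}-\tfrac{1}{n}K)\subseteq\bar U$.

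The technical heart is a lower bound on $\vol(\pi(\tfrac{1}{n}K))$ obtained from the ``multiplication by $n$'' endomorphism $s_n:T\to T$, $\mathbf{t}\mapsto n\mathbf{t}$. Its kernel is $\tfrac{1}{n}\Lambda/\Lambda$, which has order $n^n$, so $s_n$ is everywhere $n^n$-to-one and $\vol(A)\ge n^{-n}\vol(s_n(A))$ for every measurable $A\subseteq T$. Applied to $A=\pi(\tfrac{1}{n}K)$ with $s_n(\pi(\tfrac{1}{n}\mathbf{k}))=\pi(\mathbf{k})$ giving $s_n(A)=\bar K$, this yields $\vol(\pi(\tfrac{1}{n}K))\ge n^{-n}\vol(\bar K)$. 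Chaining everything (using $\vol(\pi(\mathbf{x}-\tfrac{1}{n}K))=\vol(\pi(\tfrac{1}{n}K))$ by translation invariance of Haar measure on $T$) we obtain
\[
\tfrac{\det(\Lambda)}{n^n+1}\le n^{-n}\vol(\bar K)\le \vol\bigl(\pi(\tfrac{1}{n}K)\bigr)=\vol\bigl(\pi(\mathbf{x}-\tfrac{1}{n}K)\bigr)\le \vol(\bar U)\le\tfrac{\det(\Lambda)}{n^n+1},
\]
so equality must hold throughout.

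The final ingredient is topological. Equality $\vol(\pi(\mathbf{x}-\tfrac{1}{n}K))=\vol(\bar U)$ together with $\pi(\mathbf{x}-\tfrac{1}{n}K)\subseteq\bar U$ means the difference $\bar U\setminus\pi(\mathbf{x}-\tfrac{1}{n}K)$ has measure zero; but this difference is the open set $\bar U$ minus the compact set $\pi(\mathbf{x}-\tfrac{1}{n}K)$, hence itself open, and a non-empty open set in $T$ has positive Haar measure, so this difference must be empty. Therefore $\bar U=\pi(\mathbf{x}-\tfrac{1}{n}K)$ is simultaneously open and compact, i.e.\ clopen in the connected torus $T$, forcing $\bar U\in\{\emptyset,T\}$. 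This contradicts $0<\vol(\bar U)=(n^n+1)^{-1}\det(\Lambda)<\det(\Lambda)$. The main obstacle I anticipate is precisely this last step: the volume chain alone only forces equalities rather than an outright contradiction, so one has to convert measure-theoretic tightness into a geometric impossibility through the clopen/connectedness argument on $T$.
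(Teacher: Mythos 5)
The paper does not reprove this lemma---it simply cites it as Lemma~4 of Rogers~\cite{59R}---so there is no in-paper argument to compare against. Taken on its own terms, your proof is correct. The two nonstandard steps both check out: the convexity computation showing $\mathbf{x}-\tfrac1n K\subseteq U$ is exactly the identity $\mathbf{k}'+\tfrac1n\mathbf{k}=\tfrac{n+1}{n}\bigl(\tfrac{n}{n+1}\mathbf{k}'+\tfrac1{n+1}\mathbf{k}\bigr)$, and the volume chain on the torus closes because $s_n$ is a surjective endomorphism of the compact group $T$ (hence Haar-measure-preserving) with kernel of order $n^n$, so $\vol(s_n(A))=\vol\bigl(s_n^{-1}(s_n(A))\bigr)=\vol\bigl(A+\ker s_n\bigr)\le n^n\vol(A)$. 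I would only suggest spelling this last point out: as written, ``$n^n$-to-one'' by itself does not imply the measure bound, and the reader needs the group structure (or equivalently the fact that $s_n$ lifts to multiplication by $n$, with Jacobian $n^n$, followed by the volume-nonincreasing quotient map). The final clopen/connectedness step is the right way to convert tightness of the chain into a contradiction under the non-strict hypothesis $\bar\rho\le(n^n+1)^{-1}$: you correctly use that $\bar K=\pi(K)$ is compact (hence closed) so $\bar U$ is open, that $\pi(\mathbf{x}-\tfrac1n K)$ is compact, that a nonempty open subset of $T$ has positive Haar measure, and that $T$ is connected. The argument is complete.
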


The following lemma, which extends~{\cite[Lemma~3]{59R}}, will be crucial for our proof. 
Due to its technical complexity, we postpone its proof to Section~\ref{SEC:proof-lemma-cartesian-product-covering}.
\begin{lemma}\label{LEMMA:cartesian-product-covering}
Let $d\ge 1$ be an integer for which there exists a robust lattice covering of $\mathbb{R}^{d}$ with density $D\le \nu_d$. Then there is a constant $C_{\ref{LEMMA:cartesian-product-covering}} = C_{\ref{LEMMA:cartesian-product-covering}}(d)$ such that, for any $n\ge 1$, if $K \subseteq \mathbb{R}^{n}$ is a convex body and $\Lambda \subseteq \mathbb{R}^{n}$ is a lattice, then there is a lattice $\tilde{\Lambda}\subseteq \mathbb{R}^{n+d}$ satisfying
  \begin{align*}
        \bar{\rho}\big( \tilde{\Lambda}+\tilde{K}\big) 
        \le C_{\ref{LEMMA:cartesian-product-covering}} \left(\bar{\rho}\big( \Lambda+K\big)\right)^{2^d},
    \end{align*}
where $\tilde{K} \subseteq \mathbb{R}^{n+d}$ denotes the Cartesian product of $K$ and the $d$-dimensional sphere of volume~$D$. 
\end{lemma}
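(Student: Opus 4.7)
The plan is to iterate the $d=1$ case of this statement, which is essentially Rogers'~\cite[Lemma~3]{59R}, a total of $d$ times, choosing the directions of the added dimensions so that the extra $d$-dimensional factor in the final body is a fundamental parallelepiped of~$\Lambda_0$ that sits inside the sphere of volume~$D$ provided by the robust covering.

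Concretely, let $(\Lambda_0, B)$ be the given robust covering, rescaled so that $\det(\Lambda_0)=1$ and $B\subseteq\mathbb{R}^d$ is a ball of volume~$D$ centered at the origin. Applying the robustness condition to $B$ produces a fundamental parallelepiped of~$\Lambda_0$ contained in~$B$; let $\mathbf{b}'_1,\ldots,\mathbf{b}'_d$ be a basis of $\Lambda_0$ generating such a parallelepiped and set $P_0 \coloneqq \{\sum_{i=1}^d t_i\mathbf{b}'_i : t_i\in[0,1]\}$, so that a translate of $P_0$ lies in $B$. Starting from the input pair $(\Lambda, K)$, I would apply the $d=1$ case iteratively: at step $i\in\{1,\ldots,d\}$, extend the current lattice $\Lambda^{(i-1)}\subseteq\mathbb{R}^n\times\mathrm{span}(\mathbf{b}'_1,\ldots,\mathbf{b}'_{i-1})$ and the current body $K^{(i-1)} = K\times\{\sum_{j<i} t_j\mathbf{b}'_j : t_j\in[0,1]\}$ by adding one dimension in the direction of $(\mathbf{0}_n,\mathbf{b}'_i)\in\mathbb{R}^{n+d}$ with a unit segment in that direction. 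Each such step should bound the new uncovered density by a constant times the square of the previous one, so after $d$ iterations one obtains $\tilde{\Lambda}\subseteq\mathbb{R}^{n+d}$ satisfying $\bar{\rho}(\tilde{\Lambda}+K\times P_0)\le C_d\,\bar{\rho}(\Lambda+K)^{2^d}$ for some $C_d=C_d(d)$. Since a translate of $P_0$ is contained in $B$, by translation-invariance of the covering density and monotonicity (enlarging the body only decreases the uncovered density) we then conclude $\bar{\rho}(\tilde{\Lambda}+\tilde{K})\le C_d\,\bar{\rho}(\Lambda+K)^{2^d}$ for $\tilde{K}=K\times B$, as desired.

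The main technical obstacle will be executing the iteration when the added directions $\mathbf{b}'_1,\ldots,\mathbf{b}'_d$ are in general not mutually orthogonal: at each step one needs a version of Rogers' construction that works along an arbitrary prescribed direction rather than a fresh orthogonal coordinate axis. I would handle this by performing a linear change of coordinates in $\mathbb{R}^{n+d}$ that maps $(\mathbf{0}_n,\mathbf{b}'_i)$ to a new orthogonal axis, invoking Rogers' original statement in those coordinates, and then undoing the transformation; the uncovered density is invariant under such linear changes. The robust covering hypothesis enters precisely at the choice of $\mathbf{b}'_1,\ldots,\mathbf{b}'_d$: it guarantees that the iterated parallelepiped $P_0$ fits inside a ball of volume~$D$, which is what allows the final passage from $P_0$ to the sphere $B$ without inflating the exponent. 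Propagating the constant $C_d$ through the $d$ iterations is straightforward and yields a value depending only on~$d$.
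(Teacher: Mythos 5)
There is a genuine gap, and it appears already at the very first step of your iteration. If you add a unit segment in the new coordinate, then a generic horizontal slice of $\tilde{\Lambda}+K\times[0,1]$ at height $w\in(0,1)$ is covered by only the single translate corresponding to $m=\lfloor w\rfloor=0$, so $\bar{\rho}$ stays at $\delta$ and does not square. Rogers' Lemma~3 is the $d=1$ case of Lemma~\ref{LEMMA:cartesian-product-covering} applied with the robust covering $(\mathbb{Z},B_1^1)=(\mathbb{Z},[-1,1])$, i.e.\ with $D=\nu_1=2$: the interval must have length~$2$ so that every slice sees the two consecutive integers $\lfloor w\rfloor$ and $\lceil w\rceil$. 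If you repair your iteration to use length-$2$ segments, then after $d$ steps (and the linear change of coordinates you describe) the body in the last $d$ coordinates is a $2\times$-dilate of the fundamental parallelepiped, of volume $2^d$, whose circumradius is roughly twice that of $\mathrm{P}_0$. That dilate is contained in $B^d_{\sqrt d}$, so this only establishes the lemma with $D=\nu_d$, i.e.\ it just recovers Rogers' bound; for a genuinely better robust covering it does \emph{not} fit inside the ball $B^d_r$ of volume $D<\nu_d$. For instance, with the hexagonal covering of Lemma~\ref{lm:robust-covering-R2}, $r=2/\sqrt3\approx1.15$ while the dilated rhombus has circumradius $\sqrt3\approx1.73$, so the monotonicity step in your last sentence fails exactly in the regime where the improvement matters.

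The deeper point you are missing is why robustness is needed for \emph{every} ball, not just the one centred at the origin. The body in the lemma is $K\times B_r^d$, and for a slice at height $\mathbf{w}\in\mathbb{R}^d$ the relevant family of $\Lambda$-translates is indexed by $\Lambda_d\cap B^d_r(\mathbf{w})$. Robustness guarantees some fundamental parallelepiped sits inside $B^d_r(\mathbf{w})$, but its \emph{type} (i.e.\ its translate with a vertex at the origin) varies with $\mathbf{w}$. One therefore needs the squaring bound to hold simultaneously for all of the boundedly many parallelepiped types in $B_{2r}^d$ with a vertex at $\mathbf{0}$; this is precisely what Lemma~\ref{LEMMA:point-type-upper-bound} (bounding their number) and Lemma~\ref{LEMMA:measure-preserve} (showing, via a measure-preserving change of the random offsets, that each type is equally likely to be ``good'') are for, followed by a union bound in Claim~\ref{CLAIM:prob-each-type}. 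Your proposal hard-wires a single $\mathrm{P}_0$ and never addresses the other types, so even with the length issue fixed it would not give the stated inequality for $\tilde{K}=K\times B^d_r$ when $D<\nu_d$.
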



We will also use the following simple fact. 
\begin{fact}\label{FACT:sub-convex-cartesian-product}
    Suppose that $n, d, k \ge 1$ are integers satisfying $1 \le k d \le n$. 
    Then 
    \begin{align*}
        K_{k,d} 
        \coloneqq B_{\sqrt{n-kd}}^{n-kd} \times B_{\sqrt{d}}^{d} \times \cdots \times B_{\sqrt{d}}^{d} 
    \end{align*}
    is a subset of $B_{\sqrt{n}}^{n}$
    and 
    \begin{align*}
        \vol(K_{k,d}) 
        = \frac{\nu_{n-kd} \cdot \nu_{d}^{k} }{\nu_{n}} \cdot \vol\left(B^n_{\sqrt{n}}\right). 
    \end{align*}
\end{fact}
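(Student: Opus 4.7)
The statement has two parts: the containment $K_{k,d} \subseteq B_{\sqrt{n}}^n$ and the volume identity. Both follow directly from the definitions of a Cartesian product and the notation $\nu_d = \vol(B_{\sqrt{d}}^d)$, so I anticipate no real obstacle; this is a bookkeeping fact whose sole purpose is to produce a convex body inside $B_{\sqrt{n}}^{n}$ that has a Cartesian-product structure compatible with Lemma~\ref{LEMMA:cartesian-product-covering}.

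For the containment, I would take an arbitrary point of $K_{k,d}$, which by definition of the Cartesian product is a tuple $(\mathbf{x}_0, \mathbf{x}_1, \ldots, \mathbf{x}_k)$ with $\mathbf{x}_0 \in \mathbb{R}^{n-kd}$ and $\mathbf{x}_i \in \mathbb{R}^{d}$ for $i \in [k]$, satisfying $\|\mathbf{x}_0\|_2^2 \le n-kd$ and $\|\mathbf{x}_i\|_2^2 \le d$ for each $i \in [k]$. Concatenating the coordinates gives a point in $\mathbb{R}^n$ whose squared Euclidean norm is
\begin{align*}
\|\mathbf{x}_0\|_2^2 + \sum_{i=1}^{k} \|\mathbf{x}_i\|_2^2 \le (n-kd) + k\cdot d = n,
\end{align*}
so the point lies in $B_{\sqrt{n}}^{n}$, proving the inclusion.

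For the volume identity, I would simply invoke the fact that the Lebesgue volume of a Cartesian product of measurable sets equals the product of their volumes. This yields
\begin{align*}
\vol(K_{k,d}) = \vol\big(B_{\sqrt{n-kd}}^{n-kd}\big) \cdot \vol\big(B_{\sqrt{d}}^{d}\big)^{k} = \nu_{n-kd} \cdot \nu_d^{k},
\end{align*}
by the definition of $\nu_m$. Since $\vol(B_{\sqrt{n}}^n) = \nu_n$, dividing and multiplying by $\nu_n$ rearranges this into the claimed expression $\frac{\nu_{n-kd} \cdot \nu_d^{k}}{\nu_n}\cdot \vol(B_{\sqrt{n}}^n)$, completing the proof. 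The only minor subtlety is the edge case $n = kd$, where the first factor degenerates to a zero-dimensional ball of volume $\nu_0 = 1$, which is consistent with the convention $B_0^0 = \{0\}$; this can be handled with a one-line remark.
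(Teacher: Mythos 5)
Your proof is correct. The paper states this as a ``simple fact'' and omits the proof entirely, and your argument is the obvious one: the containment follows by summing the squared-norm bounds $(n-kd)+k\cdot d=n$ coordinatewise, and the volume identity follows from multiplicativity of Lebesgue measure over Cartesian products together with $\vol\big(B_{\sqrt{n}}^{n}\big)=\nu_n$.
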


Now we present the proof of Theorem~\ref{THM:general-parallelepiped}.
\begin{proof}[Proof of Theorem~\ref{THM:general-parallelepiped}]
    Let $d \ge 1$ be an integer such  that there exists a robust lattice covering of $\mathbb{R}^{d}$ with density $D\le \nu_{d}$. 
    Let $C_{\ref{LEMMA:cartesian-product-covering}} = C_{\ref{LEMMA:cartesian-product-covering}}(d)$ be the constant given by Lemma~\ref{LEMMA:cartesian-product-covering}. 
    Let $C \coloneqq 2\mathrm{e} (2\pi \mathrm{e})^{5d/2}/5$.
    Let $n$ be a sufficiently large integer. 
    Fix an integer $k$ satisfying
    \begin{align}\label{equ:k-choice}
        \frac{1}{d} \log_2\ln n + 4 
        \le k 
        \le \frac{1}{d} \log_2\ln n + 5. 
    \end{align}
    Let
    \begin{align}\label{equ:eta-choice}
        \eta 
        \coloneqq \frac{n-kd}{4} \ln\left( \frac{27}{16} \right) - 3 \ln (n-kd)
        < \frac{n}{5}.
    \end{align}

    We aim to show that there exists a lattice covering $(\Lambda,B^n)$ of $\mathbb{R}^{n}$ with density at most
    \begin{align*}
        C n \left({D}/{\nu_{d}}\right)^{\frac{1}{d}\log_2\ln n} (2\pi \mathrm{e})^{\frac{1}{2}\log_2\ln n}.
    \end{align*}

    Let $K_0 \subseteq \mathbb{R}^{n-kd}$ be a sphere with volume $\eta$ at the origin. 
    Let $r\in \mathbb{R}$ be such that $B_{r}^{d} = D$. 
    For $i \in [k]$, define $K_{i} \coloneqq K_{i-1} \times B_{r}^{d}$.  
    Note that for $i \in [0, k]$, 
    \begin{align*}
        \vol(K_{i}) = \eta D^i. 
    \end{align*}
    %
    By Fact~\ref{FACT:sub-convex-cartesian-product}, there exists an $n$-dimensional ball $B \subseteq \mathbb{R}^{n}$ such that, after some linear transformation $T$ (scaling the radii of the balls $K_0$ and $B_{r}^{d}$), the set $K_{k}$ is contained in $B$, and 
    \begin{align}\label{equ:volume-B-upper-bound}
      \vol(B) 
      = \vol(T(K_{k})) \cdot \frac{\nu_{n}}{\nu_{n-kd} \cdot \nu_{d}^{k} }
      = |\mathrm{det}(T)| \cdot \eta  \left(\frac{D}{ \nu_{d} }\right)^k \frac{\nu_{n}}{\nu_{n-kd}}. 
    \end{align}

    Using the estimate $\Gamma(1+x) = (1+o(1)) \sqrt{2\pi x}\left(x/\mathrm{e}\right)^{x}$ as $x\to\infty$ (see e.g.~\cite{Dav59}), we obtain  
    \begin{align*}
        \nu_{n}
        = \frac{(\pi n)^{\frac{n}{2}}}{\Gamma\left(\frac{n}{2}+1\right)}
        = (1+o(1)) \frac{(2\pi \mathrm{e})^{\frac{n}{2}}}{\sqrt{\pi n}}. 
    \end{align*}
    It follows from~\eqref{equ:volume-B-upper-bound}, together with the assumption that $n$ is sufficiently large, that 
    \begin{align}\label{equ:volume-B-upper-bound-b}
      \vol(B) 
      & = (1+o(1)) |\mathrm{det}(T)| \cdot \eta  \left(\frac{D}{ \nu_{d} }\right)^k \frac{(2\pi \mathrm{e})^{\frac{n}{2}} \sqrt{\pi (n-kd)}}{(2\pi \mathrm{e})^{\frac{n-kd}{2}} \sqrt{\pi n}}  \notag \\
      & \le 2 |\mathrm{det}(T)| \cdot \eta  \left(\frac{D}{ \nu_{d} }\right)^k (2\pi \mathrm{e})^{\frac{kd}{2}}. 
    \end{align}
    Let $C_{\ref{LEMMA:almost-cover-Rogers}}$ be the constant given by Lemma~\ref{LEMMA:almost-cover-Rogers}. 
    Applying Lemma~\ref{LEMMA:almost-cover-Rogers} to $K_0$, we obtain a lattice $\Lambda_0 \subseteq \mathbb{R}^{n-kd}$ with $\det(\Lambda_{0}) = \vol(K_{0})/\eta = 1$  such that $\bar{\rho}(\Lambda_0 + K_0)
            \le \delta_0$, that is,
    the set of points in $\mathbb{R}^{n-kd}$ not covered by $K_0 + \Lambda_0$ has density at most $\delta_0$, where
    \begin{align*}
         \delta_0 
        \coloneqq C_{\ref{LEMMA:almost-cover-Rogers}} (n-kd)^3 \left(16/27\right)^{\frac{n-kd}{4}}
        \le C_{\ref{LEMMA:almost-cover-Rogers}} n^3 \left(16/27\right)^{\frac{n}{5}}.
    \end{align*}
    By applying Lemma~\ref{LEMMA:cartesian-product-covering} iteratively $k$ times, we obtain lattices $\Lambda_1, \ldots, \Lambda_k$ such that, for each $i\in [k]$, the following properties hold: 
    \begin{itemize}
        \item the lattice $\Lambda_i \subseteq \mathbb{R}^{n-kd+id}$ satisfies $\det(\Lambda_{i}) = 1$, and 
        \item $\bar{\rho}(\Lambda_i + K_i) \le \delta_i \coloneqq C_{\ref{LEMMA:cartesian-product-covering}}\,  \delta_{i-1}^{2^d}$. 
    \end{itemize}
    In particular, 
    \begin{align*}
        \delta_{k}
        = C_{\ref{LEMMA:cartesian-product-covering}}\, \delta_{k-1}^{2^d}
        = C_{\ref{LEMMA:cartesian-product-covering}}^{1+2^d} \delta_{k-2}^{2^{2d}}
        = \cdots 
        & = C_{\ref{LEMMA:cartesian-product-covering}}^{1+2^d+\cdots+2^{(k-1)d}} \delta_{0}^{2^{kd}} \\
        & = C_{\ref{LEMMA:cartesian-product-covering}}^{\frac{2^{kd}-1}{2^d-1}} \delta_{0}^{2^{kd}} 
        = C_{\ref{LEMMA:cartesian-product-covering}}^{\frac{-1}{2^d-1}} \Big(C_{\ref{LEMMA:cartesian-product-covering}}^{\frac{1}{2^d-1}} \delta_{0}\Big)^{2^{kd}}.
    \end{align*}
    Since $C_{\ref{LEMMA:almost-cover-Rogers}}$, $C_{\ref{LEMMA:cartesian-product-covering}}$, and $d$ are fixed, we can choose $n$ sufficiently large so that 
    \begin{align*}
        n 
        \ge \max\left\{ C_{\ref{LEMMA:cartesian-product-covering}}^{\frac{-1}{2^d-1}},~C_{\ref{LEMMA:almost-cover-Rogers}} C_{\ref{LEMMA:cartesian-product-covering}}^{\frac{1}{2^d-1}}\right\}.
    \end{align*}
    Combining it with the assumption $k d \ge \log_{2} \ln n + 4$, we obtain 
    \begin{align*}
        \ln \delta_{k} 
        \le \ln C_{\ref{LEMMA:cartesian-product-covering}}^{\frac{-1}{2^d-1}} + 2^{kd} \ln \left(C_{\ref{LEMMA:cartesian-product-covering}}^{\frac{1}{2^d-1}} \delta_{0}\right) 
        & \le \ln n + 2^{kd} \left( \ln n^4 + \ln \left(\frac{16}{27}\right)^{n/5} \right) \\
        & \le \ln n + 16 \ln n \left( 4\ln n - \frac{n}{5} \ln \frac{27}{16} \right) \\
        & =  - \left(\frac{16}{5} \ln \frac{27}{16}\right) n \ln n + 64 \ln^2 n + \ln n, 
    \end{align*}
    which is smaller than $- \ln \left(n^n + 1\right) = -(1+o(1)) n\ln n$ as $n$ is sufficiently large. 
    Thus,
    \begin{align*}
        \delta_{k}
        \le \left( n^n + 1 \right)^{-1}. 
    \end{align*}
    So it follows from Lemma~\ref{LEMMA:expand-Rogers} that $\Lambda_k+ \left(1+ 1/n\right) K_{k}=\mathbb{R}^n$. Since $T(K_{k}) \subseteq B$, we obtain 
    \begin{align*}
        \big(\Lambda_k, \left(1+1/n\right) T^{-1}(B) \big) = \mathbb{R}^{n}, 
    \end{align*}
    which implies that $\big(T(\Lambda_k), \left(1+1/n\right) B\big)$ forms a (sphere lattice) covering of $\mathbb{R}^{n}$. 

    It remains to show that the density of $\big(T(\Lambda_k), \left(1+1/n\right) B\big)$ gives the desired upper bound. 
    Indeed, by~\eqref{equ:k-choice},~\eqref{equ:eta-choice}, and~\eqref{equ:volume-B-upper-bound-b}, we have 
    \begin{align*}
        \Theta\big(T(\Lambda_k), \left(1+1/n\right) B\big)
        & = \frac{\vol\left((1+1/n)B\right)}{|\det (T(\Lambda_k))|}  
         =\left(1+\frac{1}{n}\right)^{n} \frac{\vol(B)}
        {|\det(T)||\det(\Lambda_k)|}  \\
        & \le 2 \mathrm{e} \eta  \left(\frac{D}{ \nu_{d} }\right)^k (2\pi \mathrm{e})^{\frac{kd}{2}} 
        \le C n \left(\frac{D}{ \nu_{d} }\right)^{\frac{1}{d}\log_2\ln n} (2\pi \mathrm{e})^{\frac{1}{2}\log_2\ln n}, 
    \end{align*}
    as claimed. 
    This completes the proof of Theorem~\ref{THM:general-parallelepiped}. 
\end{proof}

\section{Proof of Lemma~\ref{LEMMA:cartesian-product-covering}}\label{SEC:proof-lemma-cartesian-product-covering}
In this section, we present the proof of Lemma~\ref{LEMMA:cartesian-product-covering}, starting with a few preliminary lemmas.

We use $\mathrm{dist}(\mathbf{x}, \mathbf{y})$ denote the Euclidean distance between two points $\mathbf{x}, \mathbf{y} \in \mathbb{R}^{d}$. 
Given a set $S \subseteq \mathbb{R}^{d}$ and a point $\mathbf{x} \in \mathbb{R}^{d}$, we define
\begin{align*}
    \mathrm{dist}(\mathbf{x}, S)
    \coloneqq \inf\left\{\mathrm{dist}(\mathbf{x}, \mathbf{y}) \colon \mathbf{y} \in S \right\}. 
\end{align*}

In the following lemma, we make no attempt to optimise $C_{\ref{LEMMA:point-type-upper-bound}}$ as a function of $d$.

\begin{lemma}\label{LEMMA:point-type-upper-bound}
    Let $d \ge 1$ be an integer and $D \in [0, \nu_d]$ be a real number. For every robust lattice covering $(\Lambda, B^{d}_r)$ of $\mathbb{R}^{d}$ with density $D$, the number of fundamental parallelepipeds contained in $B_{2r}^d$ is at most $C_{\ref{LEMMA:point-type-upper-bound}} \coloneqq \left( 4^d d^{d/2}+1 \right)^{d \cdot 2^d}$.
\end{lemma}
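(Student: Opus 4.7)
My plan is to combine a standard volumetric (Voronoi) bound on the number of lattice points of $\Lambda$ in $B_{2r}^{d}$ with the observation that any fundamental parallelepiped $\mathrm{P} \subseteq B_{2r}^{d}$ is determined by its vertex set $V(\mathrm{P}) \subseteq \Lambda \cap B_{2r}^{d}$, which has exactly $2^d$ elements. The robustness of the covering is not actually needed for this lemma; only the covering property is used.

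First, since $(\Lambda, B_{r}^{d})$ is a lattice covering, every point of $\mathbb{R}^{d}$ lies within distance $r$ of a lattice point, so the Voronoi cell $V(\mathbf{x})$ of each $\mathbf{x} \in \Lambda$ is contained in the ball of radius $r$ about $\mathbf{x}$ and has volume $|\det(\Lambda)|$. The cells $V(\mathbf{x})$ for $\mathbf{x} \in \Lambda \cap B_{2r}^{d}$ are interior-disjoint and all contained in $B_{3r}^{d}$, which gives
\begin{align*}
|\Lambda \cap B_{2r}^{d}|\cdot |\det(\Lambda)| \le \vol(B_{3r}^{d}) = 3^{d}\, \vol(B_{r}^{d}) = 3^{d} D\, |\det(\Lambda)|,
\end{align*}
and hence $|\Lambda \cap B_{2r}^{d}| \le 3^{d} D \le 3^{d} \nu_d$. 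Using the Stirling lower bound $\Gamma(d/2+1) \ge (d/(2\mathrm{e}))^{d/2}$ to estimate $\nu_d \le (2\pi \mathrm{e})^{d/2}$, a routine comparison shows that $3^{d} \nu_d \le (4^{d} d^{d/2}+1)^{d}$ for every $d \ge 1$ (the tight case $d=1$ can be verified directly by counting integer multiples of $|\det(\Lambda)|$ inside $[-2r,2r]$).

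Finally, since each fundamental parallelepiped in $B_{2r}^{d}$ has all $2^d$ vertices in $\Lambda \cap B_{2r}^{d}$, the number of such parallelepipeds is at most
\begin{align*}
|\Lambda \cap B_{2r}^{d}|^{2^d} \le \big((4^{d} d^{d/2}+1)^{d}\big)^{2^d} = (4^{d} d^{d/2}+1)^{d \cdot 2^d} = C_{\ref{LEMMA:point-type-upper-bound}},
\end{align*}
as claimed.

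The only real friction is in the closed-form bound on $|\Lambda \cap B_{2r}^{d}|$: one needs it to hold uniformly in $d$, including the edge case $d=1$ where the purely volumetric estimate is off by one. The generous exponent $d\cdot 2^d$ and the ``$+1$'' in the stated constant are deliberately chosen to absorb all the slack in the volumetric and Stirling estimates, so no sharper lattice-point count is required.
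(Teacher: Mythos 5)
Your proof is correct and takes a genuinely different route from the paper's. The paper invokes robustness to extract a fundamental parallelepiped $\mathrm{P}_{\mathbf{0}}(\mathbf{v}_1,\ldots,\mathbf{v}_d)\subseteq B_{2r}^d$ and uses its basis in an inductive count over the affine sublattices $\Lambda(x_{j+1},\ldots,x_d)$: the constraint $\prod d_j=1$ with each $d_j\le 4r$ forces $d_j\ge(4r)^{-(d-1)}$, which caps the number of slices meeting $B_{2r}^d$ at each level by $C=(4r)^d+1$ and yields $|\Lambda\cap B_{2r}^d|\le C^d$. You instead observe that the covering condition alone forces every Voronoi cell into a ball of radius $r$, so the cells of the lattice points in $B_{2r}^d$ pack interior-disjointly into $B_{3r}^d$, giving $|\Lambda\cap B_{2r}^d|\le 3^dD\le 3^d\nu_d$. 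Your intermediate lattice-point bound is in fact substantially sharper than the paper's $((4r)^d+1)^d$, and your argument correctly dispenses with robustness entirely --- only the plain covering property is used --- which is a worthwhile clarification. The only care needed, which you flag, is that the volumetric estimate $3^d\nu_d\le(4^dd^{d/2}+1)^d$ fails at $d=1$ (there $3\nu_1=6>5$), so that case must be handled by the direct count of integer multiples of $|\det\Lambda|$ in $[-2r,2r]$, which gives at most $2\lfloor D\rfloor+1\le 5$ points since $D\le\nu_1=2$. With that caveat spelled out, the argument is complete and slots into the stated constant $C_{\ref{LEMMA:point-type-upper-bound}}=(4^dd^{d/2}+1)^{d\cdot 2^d}$ via the same final step as the paper --- each fundamental parallelepiped is determined by its $2^d$ vertices in $\Lambda\cap B_{2r}^d$.
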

\begin{proof}[Proof of Lemma~\ref{LEMMA:point-type-upper-bound}]
    By scaling $\Lambda$ if necessary, we may assume that $|\det(\Lambda)| = 1$. Consequently, $\vol(B^d_r) = D$. 
    Since $D \le \nu_{d}$, it follows that $r \le \sqrt{d}$. 
    Let 
    \begin{align*}
        C \coloneqq (4r)^d+1 \le 4^d d^{\frac{d}{2}}+1. 
    \end{align*}
    First, we show that the number of lattice points contained in $B_{2r}^{d}$ is bounded. 
    
    \begin{claim}\label{CLAIM:lattice-points-upper-bound}
        We have 
        \begin{align*}
            \left|\Lambda \cap B^d_{2r}\right|
            \le C^{d}. 
        \end{align*}
    \end{claim}
    \begin{proof}[Proof of Claim~\ref{CLAIM:lattice-points-upper-bound}]
        By the definition of a robust lattice covering, the sphere $B_{r}^{d}$ contains a fundamental parallelepiped $\mathrm{P}_{\mathbf{x}}(\mathbf{v}_1, \ldots, \mathbf{v}_{d})$ of $\Lambda_{d}$, where $\mathbf{x} \in \mathbb{R}^{d}$ and $\mathbf{v}_1, \ldots, \mathbf{v}_{d} \in \mathbb{R}^{d}$ are linearly independent. 
        Note that $\mathbf{x}$ must lie in $\Lambda_{d}$, and hence the parallelepiped $\mathrm{P}_{\mathbf{0}}(\mathbf{v}_1, \ldots, \mathbf{v}_{d})$ obtained by translating $\mathrm{P}_{\mathbf{x}}(\mathbf{v}_1, \ldots, \mathbf{v}_{d})$ by $-\mathbf{x}$ is also  a fundamental parallelepiped of $\Lambda_{d}$.
        Moreover, since $\mathbf{x} \in B_{r}^{d}$, we have $\mathrm{P}_{\mathbf{0}}(\mathbf{v}_1, \ldots, \mathbf{v}_{d}) \subseteq B_{2r}^{d}$.
        
        Let $\mathrm{P} \coloneqq \mathrm{P}_{\mathbf{0}}(\mathbf{v}_1, \ldots, \mathbf{v}_{d})$. 
        Since $\mathrm{P}$ is a fundamental parallelepiped of $\Lambda_{d}$, the lattice $\Lambda$ is generated 
        by $\mathbf{v}_1, \ldots, \mathbf{v}_{d} \in \mathbb{R}^{d}$. 
        Let $\mathbf{v}_0 \coloneqq \mathbf{0}$ and $V_0 \coloneqq \left\{\mathbf{v}_{0}\right\}$. 
        For each $j\in [d]$, define
        \begin{align*}
            V_j 
            \coloneqq \left\{ \sum_{i=1}^{j} \lambda_i \mathbf{v}_i \colon \text{$\lambda_i \in \mathbb{R}$ for $i \in [j]$} \right\}
            \quad\text{and}\quad 
            d_j 
            \coloneqq \mathrm{dist}\left(\mathbf{v}_j, V_{j-1}\right). 
        \end{align*}
        By the definition of a fundamental parallelepiped, we have $\vol(\mathrm{P}) = |\det(\Lambda)| = 1$. It follows that 
        \begin{align}\label{equ:volume-P-product}
            \prod_{j = 1}^{d} d_{j}
            = \vol(\mathrm{P})
            = 1.
        \end{align}
        Since $\mathrm{P} \subseteq B^d_{2r}(\mathbf{0})$, we have $d_j \le \dist(\mathbf{v}_{j},\mathbf{v}_{j-1} )\le 4r$ for every $j \in [d]$. 
        Combining it with~\eqref{equ:volume-P-product}, we obtain 
        \begin{align}\label{equ:dj-lower-bound}
            d_i =\prod_{j\in [d]\atop j\not=i} \frac1{d_j}
            \ge \frac{1}{(4r)^{d-1}}
            \quad\text{for every $i \in [d]$.}
        \end{align}
        Given $j \in [0, d]$ and a $(d-j)$-tuple $(x_{j+1}, \ldots, x_d) \in \mathbb{Z}^{d-j}$, define 
        \begin{align}\label{equ:Lambda-d-restriction}
            \Lambda(x_{j+1}, \ldots, x_d)
            \coloneqq \left\{ \sum_{i=1}^{j} \lambda_i \mathbf{v}_i + \sum_{i=j+1}^{d} x_i \mathbf{v}_{i} \colon \text{$\lambda_i \in \mathbb{Z}$ for $i \in [j]$}\right\}.
        \end{align}
        We now prove by induction on $j$ that, for every $(x_{j+1}, \ldots, x_d) \in \mathbb{Z}^{d-j}$, the following holds:
        \begin{align}\label{equ:induction-lattice-points-upper-bound}
            \left| \Lambda(x_{j+1}, \ldots, x_d) \cap B^d_{2r}\right| 
            \le C^{j}. 
        \end{align}
        The base case $j = 0$ is clear, since for every $(x_{1}, \ldots, x_d) \in \mathbb{Z}^{d}$, the set $\Lambda(x_1, \ldots, x_d)$ contains only one point. 
        
        We now focus on the inductive step. Fix $j \ge 1$ and suppose for contradiction that there exists $(x_{j+1}, \ldots, x_d) \in \mathbb{Z}^{d-j}$ for which~\eqref{equ:induction-lattice-points-upper-bound} fails. 
        By the inductive hypothesis,~\eqref{equ:induction-lattice-points-upper-bound} holds when $j$ is replaced with $j-1$. 
        This implies that the number of choices of $x_{j} \in \mathbb{Z}$ for which 
        \begin{align*}
            \Lambda(x_j, x_{j+1}, \ldots, x_d) \cap B^d_{2r} \neq \emptyset 
        \end{align*}
        is more than $C$. 
        Therefore, there exist two integers $y_j, z_j \in \mathbb{Z}$ with $|y_j -z_j| > C$ such that 
        \begin{align*}
            \Lambda(y_j, x_{j+1}, \ldots, x_d) \cap B^d_{2r}
            \neq \emptyset 
            \quad\text{and}\quad 
            \Lambda(z_j, x_{j+1}, \ldots, x_d) \cap B^d_{2r}
            \neq \emptyset. 
        \end{align*}
        Fix $\mathbf{u}_1 \in \Lambda(y_j, x_{j+1}, \ldots, x_d) \cap B^d_{2r}$ and $\mathbf{u}_{2} \in \Lambda(z_j, x_{j+1}, \ldots, x_d) \cap B^d_{2r}$.
        It follows from definition~\eqref{equ:Lambda-d-restriction} that there exists integers $\lambda_{1}, \cdots, \lambda_{j-1}$ such that 
        \begin{align*}
            \mathbf{u}_{2} - \mathbf{u}_{1}
            = \sum_{i=1}^{j-1} \lambda_{i} \mathbf{v}_{i} + (z_j - y_j) \mathbf{v}_{j}
            = (z_j - y_j) \left(\mathbf{v}_j - \sum_{i=1}^{j-1} \frac{\lambda_{i}}{y_j - z_j} \mathbf{v}_{i}\right). 
        \end{align*}
        Note that $\sum_{i=1}^{j-1} \frac{\lambda_{i}}{y_j - z_j} \mathbf{v}_{i}$ lies in the subspace $V_{j-1}$. Therefore, by~\eqref{equ:dj-lower-bound}, we have
        \begin{align*}
            \dist(\mathbf{u}_1, \mathbf{u}_2)
            \ge |z_j - y_j| \cdot \dist\left(\mathbf{v}_j, V_{j-1}\right)
            > C \cdot d_j
            \ge \frac{(4r)^d+1}{(4r)^{d-1}}
            > 4r, 
        \end{align*}
        which contradicts the assumption that both $\mathbf{u}_1$ and $\mathbf{u}_2$ lie in $B_{2r}^{d}$. 
        This completes the proof of the inductive step, and thus establishes~\eqref{equ:induction-lattice-points-upper-bound}. 
        In particular, setting $j = d$ yields Claim~\ref{CLAIM:lattice-points-upper-bound}. 
    \end{proof}
    
    Note that each $d$-dimensional (fundamental) parallelepiped has $2^d$ vertices, so it follows from Claim~\ref{CLAIM:lattice-points-upper-bound} that the number of fundamental parallelepiped contained in $B_{2r}^d$ is at most $C^{d\cdot 2^d}$, which completes the proof of Lemma~\ref{LEMMA:point-type-upper-bound}.  
\end{proof}

Let $\mathbb{T}^n \coloneqq \mathbb{R}^n/\mathbb{Z}^n$ denote the $n$-dimensional torus. 
The following two lemmas routinely follow from standard results. For completeness, we include their proofs.
\begin{lemma}\label{LEMMA:random-translation}
    Let $K \subseteq \mathbb{R}^{n}$ be a convex body and let $\delta:=\bar{\rho}\left(\mathbb{Z}^{n} + K \right)$. 
    Let $\mathbf{y} \in \mathbb{T}^{n}$ be a point chosen uniformly at random, according to the Lebesgue measure restricted to the cube $[0,1)^n$. 
    Let $\tilde{K} \coloneqq K \cup (K+\mathbf{y})$. 
    Then 
    \begin{align*}
        \mathbb{E}\left[ \bar{\rho}\big( \mathbb{Z}^{n} + \tilde{K}  \big) \right]
        = \delta^{2}. 
    \end{align*}
\end{lemma}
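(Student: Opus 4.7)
The plan is to express the uncovered-set density as an integral over the torus of an indicator function and then exploit translation invariance of the uniform measure on $\mathbb{T}^n$ together with Fubini. Since the set of points not covered by $\mathbb{Z}^n + K$ (and likewise for any translate) is $\mathbb{Z}^n$-periodic, its density $\bar\rho$ equals the Lebesgue measure of its image inside the fundamental domain $[0,1)^n \cong \mathbb{T}^n$.

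First I would introduce the indicator $f : \mathbb{T}^n \to \{0,1\}$ with $f(\mathbf{z}) = 1$ iff $\mathbf{z} \notin \mathbb{Z}^n + K$, so that $\int_{\mathbb{T}^n} f(\mathbf{z})\, d\mathbf{z} = \delta$. The key observation is that a point $\mathbf{z}$ is uncovered by $\mathbb{Z}^n + \tilde{K}$ precisely when it lies outside both $\mathbb{Z}^n + K$ and $\mathbb{Z}^n + K + \mathbf{y}$, and the latter condition is equivalent to $f(\mathbf{z} - \mathbf{y}) = 1$ when $\mathbf{z} - \mathbf{y}$ is read modulo $\mathbb{Z}^n$. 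This yields the identity
\[
\bar\rho\bigl(\mathbb{Z}^n + \tilde{K}\bigr) \;=\; \int_{\mathbb{T}^n} f(\mathbf{z})\, f(\mathbf{z} - \mathbf{y})\, d\mathbf{z}.
\]

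Then I would take expectation over $\mathbf{y}$ uniformly distributed on $\mathbb{T}^n$ and interchange the order of integration by Fubini (both factors are bounded and Borel measurable because $K$ is convex). For each fixed $\mathbf{z}$, the map $\mathbf{y} \mapsto \mathbf{z} - \mathbf{y}$ preserves the uniform measure on $\mathbb{T}^n$, so $\mathbb{E}_{\mathbf{y}}\bigl[f(\mathbf{z} - \mathbf{y})\bigr] = \int_{\mathbb{T}^n} f = \delta$ independently of $\mathbf{z}$. Plugging this into the outer integral gives $\delta \cdot \delta = \delta^2$, which is the desired identity.

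There is essentially no obstacle here: the proof is a routine application of Fubini combined with translation invariance, and the only items worth a brief sentence are measurability and the reduction of the density computation to an integral over $\mathbb{T}^n$ via periodicity. I would record these remarks and then carry out the three-line computation outlined above.
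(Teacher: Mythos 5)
Your argument is correct and is essentially the same as the paper's: you write $\bar\rho$ as an integral of an indicator over the torus, then apply Fubini and the translation invariance of Lebesgue measure on $\mathbb{T}^n$ to factor the double integral as $\delta\cdot\delta$. The only cosmetic difference is that the paper works with $\chi(\mathbf{x}+\mathbf{y})$ where you use $f(\mathbf{z}-\mathbf{y})$, which are interchangeable since $\mathbf{y}\mapsto-\mathbf{y}$ preserves the uniform measure on $\mathbb{T}^n$.
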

\begin{proof}[Proof of Lemma~\ref{LEMMA:random-translation}]
    Let $\chi \colon \mathbb{R}^{n} \to \{0, 1\}$ be the characteristic function of $\mathbb{Z}^{n} + K$. 
    Then $\chi$ is periodic with period $1$ in each of the coordinates. 
    It follows from $\bar{\rho}\left(\mathbb{Z}^{n} + K \right) = \delta$ that 
    \begin{align}\label{equ:expectation}
        \int_{[0,1)^{n}}  \left(1-\chi(\mathbf{x})\right) dx_1\cdots dx_n  
        = \delta. 
    \end{align}
    Suppose that $\mathbf{y} \in \mathbb{T}^{n}$ is a point chosen uniformly at random according to the Lebesgue measure restricted to the cube $[0,1)^n$, and $\tilde{K} = K \cup (K+\mathbf{y})$.
    Using~\eqref{equ:expectation}, we obtain 
    \begin{align*}
        \mathbb{E}\left[ \bar{\rho}\big( \mathbb{Z}^{n} + \tilde{K}  \big) \right]
        & = \int_{[0,1)^{n}} \left(\int_{[0,1)^{n}}  \left(1-\chi(\mathbf{x})\right) \left(1-\chi(\mathbf{x} + \mathbf{y})\right) dx_1\cdots dx_n \right) dy_1 \cdots dy_n \\
        & = \int_{[0,1)^{n}}  \left(1-\chi(\mathbf{x})\right) \left( \int_{[0,1)^{n}} \left(1-\chi(\mathbf{x} + \mathbf{y})\right) dy_1 \cdots dy_n \right) dx_1\cdots dx_n \\
        & = \int_{[0,1)^{n}}  \left(1-\chi(\mathbf{x})\right) \left( \int_{[0,1)^{n}} \left(1-\chi(\mathbf{z})\right) dz_1 \cdots dz_n \right) dx_1\cdots dx_n 
        = \delta^2,  
    \end{align*}
    as desired.     
\end{proof}

\begin{lemma}\label{LEMMA:measure-preserve}
    Let $n, d \ge 1$ be integers. 
    Suppose that $M \in \mathbb{Z}^{d \times d}$ is a matrix with $|\det(M)| = 1$. 
    Define the map $\psi \colon \mathbb{R}^{d \times n} \to \mathbb{R}^{d \times n}$ by $\psi(X) = MX$ for all $X \in \mathbb{R}^{d\times n}$. 
    Then the map $\phi$ induced by $\psi$ on $\mathbb{T}^{d\times n}$, i.e.  
    \begin{align*}
        \phi(X) := \psi(X) \Mod{\mathbb{Z}^{d \times n}}
        \quad\text{for every}\quad X \in \mathbb{T}^{d\times n}, 
    \end{align*}
    is bijective and (Lebesgue) measure-preserving. 
\end{lemma}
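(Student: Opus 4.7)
I would prove the three assertions — well-definedness, bijectivity, and measure-preservation — in order, all resting on the single observation that the hypothesis $|\det(M)|=1$ together with $M\in\mathbb{Z}^{d\times d}$ forces $M^{-1}\in\mathbb{Z}^{d\times d}$ as well (by Cramer's rule, the entries of $M^{-1}$ are cofactors of $M$ divided by $\det(M)=\pm 1$).

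For well-definedness, if $X\equiv X'\pmod{\mathbb{Z}^{d\times n}}$ then $X-X'\in\mathbb{Z}^{d\times n}$, so $MX-MX'=M(X-X')\in\mathbb{Z}^{d\times n}$ because $M$ has integer entries. Thus $\phi$ descends from $\psi$ to a well-defined self-map of $\mathbb{T}^{d\times n}$. For bijectivity, the same argument applied to $M^{-1}$ produces a map $\phi'\colon X\mapsto M^{-1}X\Mod{\mathbb{Z}^{d\times n}}$, and $\phi'\circ\phi$ and $\phi\circ\phi'$ are both the identity on $\mathbb{T}^{d\times n}$, so $\phi$ is a bijection.

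For measure-preservation, I would identify $\mathbb{R}^{d\times n}$ with $\mathbb{R}^{dn}$ by stacking columns; under this identification, $\psi$ becomes the linear map whose matrix is block-diagonal consisting of $n$ copies of $M$, so its Jacobian has absolute value $|\det(M)|^n=1$. Hence $\psi$ preserves Lebesgue measure on $\mathbb{R}^{d\times n}$. To conclude on the torus, I would compute, for any non-negative measurable $f$ on $\mathbb{T}^{d\times n}$,
\begin{align*}
   \int_{\mathbb{T}^{d\times n}} f(\phi(X))\,dX
   &= \int_{[0,1)^{d\times n}} f(MX\Mod{\mathbb{Z}^{d\times n}})\,dX \\
   &= \int_{M\cdot[0,1)^{d\times n}} f(Y\Mod{\mathbb{Z}^{d\times n}})\,dY,
\end{align*}
via the change of variables $Y=MX$ (absolute Jacobian $1$). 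Since $M$ permutes $\mathbb{Z}^{d\times n}$ and $M\cdot[0,1)^{d\times n}$ has volume $1$, this parallelepiped is a fundamental domain for $\mathbb{Z}^{d\times n}$, so the last integral equals $\int_{\mathbb{T}^{d\times n}} f(Y)\,dY$.

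There is no real obstacle here; the only step deserving care is the last one, where I need to justify that translating integration from $M\cdot[0,1)^{d\times n}$ to any other fundamental domain of $\mathbb{Z}^{d\times n}$ (such as $[0,1)^{d\times n}$) does not change the value of the integral of a $\mathbb{Z}^{d\times n}$-periodic function. This is standard and follows by partitioning $M\cdot[0,1)^{d\times n}$ into countably many measurable pieces and translating each by an appropriate integer vector. Alternatively, one could invoke uniqueness of Haar measure on the compact abelian group $\mathbb{T}^{d\times n}$: the pushforward $\phi_*\mu$ is again a translation-invariant probability measure (since $\phi$ is a continuous group automorphism), hence equals $\mu$.
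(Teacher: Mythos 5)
Your proof is correct and follows essentially the same route as the paper's: both note via Cramer's rule that $M^{-1}$ is a unimodular integer matrix, identify $\mathbb{R}^{d\times n}$ with $\mathbb{R}^{dn}$ so that $\psi$ has Jacobian of absolute value $1$, establish bijectivity of the induced torus map, and deduce measure-preservation. If anything, your change-of-variables and fundamental-domain argument makes explicit the step that the paper merely asserts (``if $\varphi$ is bijective, it will follow that $\varphi$ is also measure-preserving''), and your Haar-measure alternative is a clean shortcut the paper does not mention.
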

\begin{proof}[Proof of Lemma~\ref{LEMMA:measure-preserve}]
    Let $M_{\ast} = \mathrm{diag}(M, \cdots, M) \in \mathbb{Z}^{dn \times dn}$ be the matrix obtained by placing $n$ copies of the matrix $M$ along the diagonal.
    It is clear that $M_{\ast}$ is an integer matrix with $|\det(M_{\ast}) |= 1$. 
    By Cramer's rule, the inverse $M_{\ast}^{-1}$ of $M_{\ast}$ is also an integer matrix with $|\det(M^{-1}_{\ast})| = 1$. 
    
    Define the map $\psi_{\ast} \colon \mathbb{R}^{dn} \to \mathbb{R}^{dn}$ by $\psi(\mathbf{x}) = M\mathbf{x}$ for every $\mathbf{x} \in \mathbb{R}^{dn}$. 
    It is clear that $\psi$ and $\psi_{\ast}$ define the same linear map under the identification of $\mathbb{R}^{d \times n} $ with $\mathbb{R}^{dn}$.
    Let $\varphi$ be the map induced by $\psi_{\ast}$ on $\mathbb{T}^{d\times n}$, i.e. 
    \begin{align*}
        \varphi(\mathbf{x}) = \psi_{\ast}(\mathbf{x}) \Mod{\mathbb{Z}^{dn}}
        \quad\text{for every}\quad \mathbf{x} \in \mathbb{T}^{dn}. 
    \end{align*}
    Since $|\det(M_{\ast})| = 1$, it follows from standard results in Analysis (see e.g.~{\cite[Lemma~40.4]{Book06}}) that $\psi_{\ast}$ is measure-preserving. 
    Thus, if we can show that $\varphi$ is bijective, it will follow that $\varphi$ is also measure-preserving.

    We begin by proving that $\varphi$ is injective. 
    Suppose to the contrary that there exist two distinct points $\mathbf{x}, \mathbf{y} \in [0,1)^{dn}$ such that $\varphi(\mathbf{x}) = \varphi(\mathbf{y})$.  
    Then we have $\varphi(\mathbf{x}) - \varphi(\mathbf{y}) = \mathbf{0}$, which means that 
    \begin{align}\label{equ:psi-ast-kernel}
        \psi_{\ast}(\mathbf{x}-\mathbf{y}) 
        = \psi_{\ast}(\mathbf{x}) - \psi_{\ast}(\mathbf{y}) 
        \in \mathbb{Z}^{dn}. 
    \end{align}
    Since both $M_{\ast}$ and $M_{\ast}^{-1}$ are integer matrices, the map $\psi_{\ast}$ induces a bijection from $\mathbb{Z}^{dn}$ onto itself. 
    Combining it with~\eqref{equ:psi-ast-kernel}, we conclude that $\mathbf{x}-\mathbf{y} \in \mathbb{Z}^{dn}$, which contradicts the assumption that $\mathbf{x} \neq \mathbf{y}$ and $\mathbf{x}, \mathbf{y} \in [0,1)^{dn}$. 

    Next, we show that $\varphi$ is surjective.
    Take an arbitrary point $\mathbf{y} \in [0,1)^{dn}$. 
    Since $M_{\ast}$ is invertible, the inverse $\psi_{\ast}^{-1}(\mathbf{y})$ exists. 
    Let $\mathbf{x}$ be the unique point in $[0,1)^{dn}$ such that $\mathbf{x} - \psi_{\ast}^{-1}(\mathbf{y}) \in \mathbb{Z}^{dn}$. 
    Then we have 
    \begin{align*}
        \varphi(\mathbf{x})
        = \psi_{\ast}(\mathbf{x}) \Mod{\mathbb{Z}^{dn}} 
        & = \psi_{\ast}\left( \psi_{\ast}^{-1}(\mathbf{y})  + \mathbf{x} - \psi_{\ast}^{-1}(\mathbf{y}) \right) \Mod{\mathbb{Z}^{dn}}  \\
        & = \psi_{\ast}\left( \psi_{\ast}^{-1}(\mathbf{y}) \right) + \psi_{\ast}\left( \mathbf{x} - \psi_{\ast}^{-1}(\mathbf{y})\right) \Mod{\mathbb{Z}^{dn}} 
        = \mathbf{y}, 
    \end{align*}
    where the last equality holds because $\psi_{\ast}$ maps $\mathbb{Z}^{dn}$ into $\mathbb{Z}^{dn}$. 
    This proves that $\varphi$ is surjective, and hence completes the proof of Lemma~\ref{LEMMA:measure-preserve}. 
\end{proof}

We are now ready to prove Lemma~\ref{LEMMA:cartesian-product-covering}.
\begin{proof}[Proof of Lemma~\ref{LEMMA:cartesian-product-covering}]
Given a convex body $K \subseteq \mathbb{R}^{n}$ and a lattice $\Lambda \subseteq \mathbb{R}^{n}$, let $\delta:=\bar{\rho}(\Lambda + K)$. 
    By applying a linear transformation to $\Lambda$ if necessary, we may assume that $\Lambda = \mathbb{Z}^n$. 
    Let $\{\mathbf{e}_i \colon i\in [n]\}$ be the standard basis of $\mathbb{R}^n$, that is, 
    \begin{align*}
        \mathbf{e}_1 = (1, 0, 0, \dots, 0)^{t}, \quad
        \mathbf{e}_2 = (0, 1, 0, \dots, 0)^{t}, \quad
        \ldots, \quad  
        \mathbf{e}_n = (0, 0, 0,\dots, 1)^{t}.
    \end{align*}
    Fix a robust lattice covering $(\Lambda_d, B_r^d)$ of $\mathbb{R}^{d}$ with density $D\le \nu_d$. 
    By scaling $\Lambda_d$ if necessary, we may assume that $|\det(\Lambda_d)| =1$ and hence, $r$ is such that $\vol(B^d_{r}) = D$. 
    Let $\mathbf{b}_1, \ldots, \mathbf{b}_d \in \mathbb{R}^{d}$ be linearly independent vectors such that 
    \begin{align*}
        \Lambda_d 
        = \left\{\sum^d_{i=1}\lambda_i \mathbf{b}_i \colon \text{$\lambda_i\in \mathbb{Z}$ for $i \in [d]$}\right\}. 
    \end{align*}
    For $i\in [n]$, let $\tilde{\mathbf{e}}_i \coloneqq (\mathbf{e}_i, \mathbf{0}) \in \mathbb{R}^{n+d}$ be the concatenation of $\mathbf{e}_i \in \mathbb{R}^{n}$ and $\mathbf{0} \in \mathbb{R}^{d}$. 
    For each $j \in [d]$, choose a vector $\mathbf{y}_{i} \in \mathbb{T}^{n}$ uniformly at random according to the Lebesgue measure restricted to the cube $[0,1)^n$, and let $\tilde{\mathbf{b}}_j \coloneqq (\mathbf{y}_j, \mathbf{b}_j) \in \mathbb{R}^{n+d}$ be the concatenation of $\mathbf{y}_i$ and $\mathbf{b}_i$. 
    Define a new (random) lattice 
    \begin{align*}
        \tilde{\Lambda}(\mathbf{y}_1, \ldots, \mathbf{y}_{d}) 
        \coloneqq \Bigg\{ \sum_{i=1}^{n}\lambda_i \tilde{\mathbf{e}}_i +\sum_{j =1}^{d} \mu_j \tilde{\mathbf{b}}_j \colon \text{$\lambda_i\in \mathbb{Z}$ for $i \in [n]$ and $\mu_j \in\mathbb{Z}$ for $j \in [d]$} \Bigg\}.
    \end{align*}
    Note that
    \begin{align*}
        |\det(\tilde{\Lambda}(\mathbf{y}_1, \ldots, \mathbf{y}_{d}))| 
        = |\det (\Lambda)| \cdot |\det(\Lambda_d)| 
        = 1,
    \end{align*}
    which can be seen by expanding the determinant of the corresponding matrix along the first $n$ columns (with each having only one non-zero entry, namely the diagonal entry $1$).
    
    Let $\tilde{K} = K \times B_{r}^{d} \subseteq \mathbb{R}^{n+d}$. 
    Let $C_{\ref{LEMMA:point-type-upper-bound}}$ be the constant given in Lemma~\ref{LEMMA:point-type-upper-bound} and define $C_{\ref{LEMMA:cartesian-product-covering}} \coloneqq \left( \left(C_{\ref{LEMMA:point-type-upper-bound}} + 1\right) d \right)^{2^d - 1}$. 
    We will show that, with positive probability, the following event occurs: 
    \begin{align*}
        \bar{\rho}\big( \tilde{\Lambda}(\mathbf{y}_1, \ldots, \mathbf{y}_{d}) + \tilde{K} \big)
        \le C_{\ref{LEMMA:cartesian-product-covering}} \delta^{2^d},
    \end{align*}
    that is, the set of points $(\mathbf{y}_1, \ldots, \mathbf{y}_{d}) \in \mathbb{T}^{n\times d}$ for which this inequality holds has positive Lebesgue measure. For this we need some further definitions and  two auxiliary claims.

    Let $B \coloneqq \left[\mathbf{b}_{1}, \ldots, \mathbf{b}_{d}\right] \in \mathbb{R}^{d \times d}$.
    Let $\phi_{\mathbf{y}_1, \ldots, \mathbf{y}_{d}} \colon \Lambda_d \to \mathbb{R}^{n}$ be the linear map defined by 
    \begin{align}\label{equ:def-phi}
        \phi_{\mathbf{y}_{1}, \ldots, \mathbf{y}_{d}}(\mathbf{z})
        = \left[\mathbf{y}_{1}, \ldots, \mathbf{y}_{d}\right] B^{-1} \mathbf{z}
        \quad\text{for every}\quad \mathbf{z} \in \Lambda_{d}. 
    \end{align}
    In other words, $\phi_{\mathbf{y}_1, \ldots, \mathbf{y}_{d}}$ sends a lattice point $\mathbf{z} \in \Lambda_d$ to $\sum_{j=1}^{d} \mu_j \mathbf{y}_j \in \mathbb{R}^{d}$, where $(\mu_1, \ldots, \mu_d) \in \mathbb{Z}^{d}$ is the unique collection of integers such that $\mathbf{z} = \sum_{j=1}^{d} \mu_j \mathbf{b}_j$.
Define    \begin{align*}
        \mathcal{P}
        \coloneqq \left\{ \mathrm{P} \subseteq B_{2r}^{d} \colon \text{$\mathrm{P}$ is a fundamental parallelepiped with $\mathbf{0}$ as a vertex}\right\}. 
    \end{align*}
    Lemma~\ref{LEMMA:point-type-upper-bound} implies that $|\mathcal{P}| \le C_{\ref{LEMMA:point-type-upper-bound}}$. 
    
    For every fundamental $\Lambda_{d}$-parallelepiped $\mathrm{P}$, let $E_{\mathrm{P}}$ denote the event that 
    \begin{align*}
        \bar{\rho}\left(\Lambda+K^{\mathrm{P}}\right)
        \le C_{\ref{LEMMA:cartesian-product-covering}} \delta^{2^d}, 
        \quad\text{where}\quad 
        K^{\mathrm{P}}(\mathbf{y}_1, \ldots, \mathbf{y}_{d})
        \coloneqq \bigcup_{\mathbf{x} \in \mathrm{P}} \left( K+\phi_{\mathbf{y}_1, \ldots, \mathbf{y}_{d}}(\mathbf{x}) \right).
    \end{align*}
    Our goal is to show that for every $\mathrm{P} \in \mathcal{P}$, the event $E_{\mathrm{P}}$ occurs with high probability. We begin with the  case of $E_{\mathrm{P}_{\ast}}$, where, for convenience, we define 
    \begin{align*}
        \mathrm{P}_{\ast} \coloneqq \mathrm{P}_{\mathbf{0}}(\mathbf{b}_1, \ldots, \mathbf{b}_{d})
        \quad\text{and}\quad
        K_{\ast}(\mathbf{y}_1, \ldots, \mathbf{y}_{d}) \coloneqq \bigcup_{\mathbf{x} \in V(\mathrm{P}_{\ast})} \left( K + \phi_{\mathbf{y}_1, \ldots, \mathbf{y}_{d}}(\mathbf{x}) \right).
    \end{align*}
    \begin{claim}\label{CLAIM:prob-standard-parallelepiped}
        We have 
        \begin{align*}
            \mathbb{P}\left[ E_{\mathrm{P}_{\ast}} \right]
            \ge 1- \frac{1}{C_{\ref{LEMMA:point-type-upper-bound}} + 1}. 
        \end{align*}
    \end{claim}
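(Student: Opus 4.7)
The plan is to apply Lemma~\ref{LEMMA:random-translation} inductively, once for each coordinate $\mathbf{y}_j$, and then convert the resulting moment bounds into a high-probability statement via Markov's inequality and a union bound over the $d$ steps. The crucial preliminary observation is that the $2^d$ vertices of $\mathrm{P}_\ast$ are precisely $V(\mathrm{P}_\ast) = \{\sum_{j \in S}\mathbf{b}_j : S \subseteq [d]\}$, and by~\eqref{equ:def-phi} one has $\phi_{\mathbf{y}_1,\ldots,\mathbf{y}_d}\bigl(\sum_{j\in S}\mathbf{b}_j\bigr) = \sum_{j \in S}\mathbf{y}_j$, whence
\begin{align*}
    K_\ast(\mathbf{y}_1,\ldots,\mathbf{y}_d)
    = \bigcup_{S \subseteq [d]} \Bigl(K + \sum_{j \in S} \mathbf{y}_j\Bigr).
\end{align*}
Setting $K^{(0)} \coloneqq K$ and $K^{(j)} \coloneqq K^{(j-1)} \cup \bigl(K^{(j-1)} + \mathbf{y}_j\bigr)$ for $j \in [d]$, a trivial induction gives $K^{(j)} = \bigcup_{S \subseteq [j]}\bigl(K + \sum_{i \in S}\mathbf{y}_i\bigr)$, so $K^{(d)} = K_\ast$.

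Write $\delta_j \coloneqq \bar{\rho}(\mathbb{Z}^n + K^{(j)})$, so $\delta_0 = \delta$. Conditionally on $(\mathbf{y}_1,\ldots,\mathbf{y}_{j-1})$, the set $K^{(j-1)}$ is fixed and measurable while $\mathbf{y}_j$ is an independent uniform point on $\mathbb{T}^n$, so Lemma~\ref{LEMMA:random-translation} (whose proof uses only measurability and $\mathbb{Z}^n$-periodicity, not convexity) yields $\mathbb{E}[\delta_j \mid \mathbf{y}_1,\ldots,\mathbf{y}_{j-1}] = \delta_{j-1}^2$. Put $c \coloneqq (C_{\ref{LEMMA:point-type-upper-bound}}+1)d$. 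By Markov's inequality,
\begin{align*}
    \mathbb{P}\bigl[\delta_j > c\,\delta_{j-1}^2 \,\big|\, \mathbf{y}_1,\ldots,\mathbf{y}_{j-1}\bigr] \le 1/c,
\end{align*}
and taking expectations followed by a union bound over $j \in [d]$ shows that, with probability at least $1 - d/c = 1 - 1/(C_{\ref{LEMMA:point-type-upper-bound}}+1)$, the inequality $\delta_j \le c\,\delta_{j-1}^2$ holds simultaneously for every $j \in [d]$.

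On that event, a short induction on $j$ yields $\delta_j \le c^{2^j - 1}\delta^{2^j}$, so at $j = d$ we obtain $\delta_d \le c^{2^d - 1}\delta^{2^d} = \bigl((C_{\ref{LEMMA:point-type-upper-bound}}+1)d\bigr)^{2^d - 1}\delta^{2^d} = C_{\ref{LEMMA:cartesian-product-covering}}\,\delta^{2^d}$, which is exactly $E_{\mathrm{P}_\ast}$. The choice of $c$ is essentially forced: the exponent $2^d-1$ must simultaneously make the telescoped product $\prod_{j=1}^{d} c^{2^{d-j}}$ equal $C_{\ref{LEMMA:cartesian-product-covering}}$ and make the union-bound failure probability $d/c$ equal $1/(C_{\ref{LEMMA:point-type-upper-bound}}+1)$. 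I anticipate no serious obstacle here; the only mildly delicate point is verifying that Lemma~\ref{LEMMA:random-translation} applies to the non-convex set $K^{(j-1)}$, which is resolved by noting that convexity plays no role in its proof.
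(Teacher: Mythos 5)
Your proof is correct and takes essentially the same route as the paper: both decompose $K_\ast$ as the iterated union $K^{(j)} = K^{(j-1)} \cup (K^{(j-1)} + \mathbf{y}_j)$, observe that Lemma~\ref{LEMMA:random-translation} applies at each step because its proof needs only measurability and $\mathbb{Z}^n$-periodicity (not convexity), apply Markov's inequality with threshold $c\,\delta_{j-1}^2$ where $c = (C_{\ref{LEMMA:point-type-upper-bound}}+1)d$, and telescope. The only cosmetic difference is in the final aggregation: the paper chains conditional probabilities $\prod_{i=1}^{d}\mathbb{P}[E_i \mid E_0 \wedge\cdots\wedge E_{i-1}] \ge (1-\tfrac{1}{Cd})^d \ge 1-\tfrac{1}{C}$ via Bernoulli's inequality, whereas you use a one-shot union bound over the $d$ events $\{\delta_j > c\,\delta_{j-1}^2\}$, each of unconditional probability at most $1/c$; both yield exactly the bound $1 - 1/(C_{\ref{LEMMA:point-type-upper-bound}}+1)$ with the same constant $C_{\ref{LEMMA:cartesian-product-covering}} = c^{2^d - 1}$.
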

    \begin{proof}[Proof of Claim~\ref{CLAIM:prob-standard-parallelepiped}]
        For each collection $\mathbf{y}_1, \ldots, \mathbf{y}_{d} \in \mathbb{T}^{n}$, since $\phi_{\mathbf{y}_1, \ldots, \mathbf{y}_{d}} \colon \Lambda_d \to \mathbb{R}^{n}$ is a linear map, the set $K_{\ast}(\mathbf{y}_1, \ldots, \mathbf{y}_{d})$ is determined by the basis $\mathbf{b}_1, \ldots, \mathbf{b}_{d}$ of $\mathrm{P}_{\ast}$.
        Thus, define $K_{0}(\mathbf{y}_1, \ldots, \mathbf{y}_{d}) \coloneqq K$, and for each $i \in [d]$, let  
        \begin{align*}
            K_{i}(\mathbf{y}_1, \ldots, \mathbf{y}_{d})
            \coloneqq K_{i-1}(\mathbf{y}_1, \ldots, \mathbf{y}_{d}) \cup \big(K_{i-1}(\mathbf{y}_1, \ldots, \mathbf{y}_{d}) + \mathbf{y}_i \big). 
        \end{align*}
        It follows from the definition of $\phi_{\mathbf{y}_1, \ldots, \mathbf{y}_{d}}$ that $\phi_{\mathbf{y}_1, \ldots, \mathbf{y}_{d}}(\mathbf{b}_i) = \mathbf{y}_i$ for $i \in [d]$. 
        Therefore, $K_{\ast}(\mathbf{y}_1, \ldots, \mathbf{y}_{d}) = K_d(\mathbf{y}_1, \ldots, \mathbf{y}_{d})$. 
 
        Let $C \coloneqq C_{\ref{LEMMA:point-type-upper-bound}} + 1$. 
        For $i \in [0, d]$, define
        \begin{align*}
            \pmb{\rho}_i 
            \coloneqq \bar{\rho}\left( \Lambda + K_{i}(\mathbf{y}_1, \ldots, \mathbf{y}_{d}) \right)
            \quad\text{and}\quad 
            \delta_{i}
            \coloneqq \frac{\left( Cd\delta \right)^{2^{i}}}{Cd}, 
        \end{align*}
        and let $E_i$ denote the event that $\pmb{\rho}_i \le \delta_{i} = Cd \delta_{i-1}^{2}$. 
        By assumption, we have $\pmb{\rho}_0 = \delta = \delta_0$. 

        For each $i \in [d]$, since $\mathbf{y}_i$ is independent of $\mathbf{y}_1, \ldots, \mathbf{y}_{i-1}$, it follows from Lemma~\ref{LEMMA:random-translation} and Markov's inequality that 
        \begin{align*}
            \mathbb{P} \left[ \pmb{\rho}_i \ge \delta_i \mid \pmb{\rho}_{i-1} \le \delta_{i-1} \right] 
            \le \frac{\delta_{i-1}^{2}}{\delta_i}
            = \frac{\delta_{i-1}^2}{C d  \delta_{i-1}^2}
            = \frac{1}{C d}.
        \end{align*}
        Thus, the conditional probability satisfies
        \begin{align*}
            \mathbb{P}\left[E_i \mid E_{i-1} \right]
            \ge 1- \frac{1}{Cd}
            \quad\text{for every}\quad i \in [d]. 
        \end{align*}
        Combining this with Bernoulli's inequality, we obtain 
        \begin{align*}
            \mathbb{E}\left[E_{\mathrm{P}_{\ast}}\right]
            = \mathbb{P}\left[ E_{d} \right]
            & \ge \mathbb{P}\left[ E_{0} \wedge \cdots \wedge E_{d} \right] \\
            &  = \prod_{i=1}^d  \mathbb{P}\left[ E_i \mid E_{0} \wedge \cdots \wedge E_{i-1} \right] \\
            & \ge \prod_{i=1}^d  \mathbb{P}\left[ E_i \mid E_{i-1} \right] 
            \ge \left( 1-\frac{1}{C d} \right)^d 
            \ge 1-\frac{1}{C}, 
        \end{align*}
        as desired. 
    \end{proof}

    Next, we extend the conclusion of Claim~\ref{CLAIM:prob-standard-parallelepiped} to all elements of $\mathcal{P}$.
    \begin{claim}\label{CLAIM:prob-each-type}
        For every $\mathrm{P} \in \mathcal{P}$, we have 
        \begin{align*}
            \mathbb{P}\left[ E_{\mathrm{P}} \right] \ge 1- \frac{1}{C_{\ref{LEMMA:point-type-upper-bound}} + 1}. 
        \end{align*}
        In particular, with positive probability, all of the events $\{E_{\mathrm{P}} \colon \mathrm{P} \in \mathcal{P}\}$ occur simultaneously.
    \end{claim}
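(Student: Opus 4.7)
The plan is to reduce the case of a general $\mathrm{P} \in \mathcal{P}$ to the already established case of $\mathrm{P}_\ast$ via a unimodular change of basis, using the measure-preserving property from Lemma~\ref{LEMMA:measure-preserve}, and then to conclude the ``in particular'' statement by a union bound over $\mathcal{P}$ using the size estimate $|\mathcal{P}| \le C_{\ref{LEMMA:point-type-upper-bound}}$ from Lemma~\ref{LEMMA:point-type-upper-bound}.

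First I would fix $\mathrm{P} \in \mathcal{P}$. Since $\mathrm{P}$ is a fundamental parallelepiped of $\Lambda_d$ with $\mathbf{0}$ as a vertex, there exist linearly independent vectors $\mathbf{b}_1', \ldots, \mathbf{b}_d' \in \Lambda_d$ which also generate $\Lambda_d$ and satisfy $V(\mathrm{P}) = V_{\mathbf{0}}(\mathbf{b}_1', \ldots, \mathbf{b}_d')$. Writing $\mathbf{b}_i' = \sum_{j=1}^d m_{ij} \mathbf{b}_j$ yields an integer matrix $M = (m_{ij}) \in \mathbb{Z}^{d \times d}$, and the fact that the two bases generate the same lattice forces $|\det M| = 1$. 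Setting $\mathbf{y}_i' \coloneqq \phi_{\mathbf{y}_1, \ldots, \mathbf{y}_d}(\mathbf{b}_i') = \sum_j m_{ij} \mathbf{y}_j$, the linearity of $\phi_{\mathbf{y}_1, \ldots, \mathbf{y}_d}$ on $\Lambda_d$ gives the key identity $K^{\mathrm{P}}(\mathbf{y}_1, \ldots, \mathbf{y}_d) = K_{\ast}(\mathbf{y}_1', \ldots, \mathbf{y}_d')$.

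Second, I would stack the tuples $(\mathbf{y}_1, \ldots, \mathbf{y}_d)$ and $(\mathbf{y}_1', \ldots, \mathbf{y}_d')$ as $d \times n$ matrices $Y$ and $Y'$, and observe that $Y' \equiv M Y \pmod{\mathbb{Z}^{d \times n}}$. Since $|\det M| = 1$, Lemma~\ref{LEMMA:measure-preserve} shows that the induced map $Y \mapsto MY$ on $\mathbb{T}^{d \times n}$ is bijective and measure-preserving, so $(\mathbf{y}_1', \ldots, \mathbf{y}_d')$ has the same uniform distribution on $\mathbb{T}^{n \times d}$ as $(\mathbf{y}_1, \ldots, \mathbf{y}_d)$. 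Combined with Claim~\ref{CLAIM:prob-standard-parallelepiped} applied to the tuple $(\mathbf{y}_1', \ldots, \mathbf{y}_d')$, this yields
\[
\mathbb{P}[E_{\mathrm{P}}] \;=\; \mathbb{P}[E_{\mathrm{P}_{\ast}}] \;\ge\; 1 - \frac{1}{C_{\ref{LEMMA:point-type-upper-bound}} + 1}.
\]

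For the ``in particular'' assertion I would apply a union bound: since $|\mathcal{P}| \le C_{\ref{LEMMA:point-type-upper-bound}}$ by Lemma~\ref{LEMMA:point-type-upper-bound},
\[
\mathbb{P}\!\left[\bigcap_{\mathrm{P} \in \mathcal{P}} E_{\mathrm{P}}\right]
\;\ge\; 1 - |\mathcal{P}| \cdot \frac{1}{C_{\ref{LEMMA:point-type-upper-bound}} + 1}
\;\ge\; \frac{1}{C_{\ref{LEMMA:point-type-upper-bound}} + 1} \;>\; 0,
\]
so all the events hold simultaneously with positive probability. The main obstacle is the per-$\mathrm{P}$ estimate: one must recognise that a different fundamental parallelepiped corresponds to a unimodular change of basis of $\Lambda_d$, and that the linearity of $\phi_{\mathbf{y}_1, \ldots, \mathbf{y}_d}$ transfers this change of basis to the tuple $(\mathbf{y}_1, \ldots, \mathbf{y}_d)$ without altering its joint distribution---exactly what Lemma~\ref{LEMMA:measure-preserve} is designed to handle. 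Once this identification is made, the probability bound reduces to the case already settled in Claim~\ref{CLAIM:prob-standard-parallelepiped}, and the union bound is automatic.
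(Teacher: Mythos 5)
Your proposal is correct and follows essentially the same route as the paper's own proof: reduce a general $\mathrm{P}\in\mathcal{P}$ to $\mathrm{P}_\ast$ via the unimodular integer change of basis relating the edge vectors of $\mathrm{P}$ to $\mathbf{b}_1,\ldots,\mathbf{b}_d$, observe that this induces a transformation $(\mathbf{y}_1,\ldots,\mathbf{y}_d)\mapsto(\mathbf{y}_1',\ldots,\mathbf{y}_d')$ on the torus which is measure-preserving by Lemma~\ref{LEMMA:measure-preserve} and identifies $K^{\mathrm{P}}(\mathbf{y}_1,\ldots,\mathbf{y}_d)$ with $K_\ast(\mathbf{y}_1',\ldots,\mathbf{y}_d')$, invoke Claim~\ref{CLAIM:prob-standard-parallelepiped}, and finish with a union bound over $|\mathcal{P}|\le C_{\ref{LEMMA:point-type-upper-bound}}$. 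The only cosmetic difference is that you argue via equidistribution of the transformed random tuple while the paper phrases it as the pushforward of the event set $S$ into $T$; the substance of the argument is identical.
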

    \begin{proof}
        Fix $\mathrm{P} \in \mathcal{P}$.
        Define sets 
        \begin{align*}
            S
            & \coloneqq \left\{ (\mathbf{y}_1, \cdots, \mathbf{y}_d) \in \mathbb{T}^{n} \times \cdots \times \mathbb{T}^{n}  \colon \text{$E_{\mathrm{P}_{\ast}}$ holds} \right\}
            \quad\text{and}\quad \\[0.4em] 
            T
            & \coloneqq \left\{ (\mathbf{y}_1, \cdots, \mathbf{y}_d) \in \mathbb{T}^{n} \times \cdots \times \mathbb{T}^{n}  \colon \text{$E_{\mathrm{P}}$ holds} \right\} 
        \end{align*}
        Note that $\mathbb{P}\left[ E_{\mathrm{P}_{\ast}} \right]$ and $\mathbb{P}\left[ E_{\mathrm{P}} \right]$ are equal to $\mu(S)$ and $\mu(T)$, respectively, where $\mu$ denotes the Lebesgue measure.  
        Recall from Claim~\ref{CLAIM:prob-standard-parallelepiped} that $\mathbb{P}\left[ E_{\mathrm{P}_{\ast}} \right] \ge 1 - \left(C_{\ref{LEMMA:point-type-upper-bound}} + 1\right)^{-1}$. 
        So it suffices to show that $\mu(T) \ge \mu(S)$ (In fact, a straightforward modification of the argument below shows that $S$ and $T$ have the same Lebesgue measure). 
        
        Fix linearly independent vectors $\mathbf{w}_1, \ldots, \mathbf{w}_{d} \in \mathbb{R}^{d}$ such that 
        \begin{align*}
            V(\mathrm{P})
            = \left\{ \sum_{i=1}^d \lambda_i  \mathbf{w}_i \colon \text{$\lambda_i \in \{0,1\}$ for $i \in [d]$} \right\}.
        \end{align*}
        For each collection $\mathbf{z}_1, \ldots, \mathbf{z}_{d} \in \mathbb{T}^{n}$, let  $K_{0, \mathrm{P}}(\mathbf{z}_1, \ldots, \mathbf{z}_{d}) \coloneqq K$, and for each $i \in [d]$, let 
        \begin{align*}
            K_{i, \mathrm{P}}(\mathbf{z}_1, \ldots, \mathbf{z}_{d})
            \coloneqq K_{i-1, \mathrm{P}}(\mathbf{z}_1, \ldots, \mathbf{z}_{d}) \cup \big( K_{i-1, \mathrm{P}}(\mathbf{z}_1, \ldots, \mathbf{z}_{d}) + \phi_{\mathbf{z}_1, \ldots, \mathbf{z}_{d}}(\mathbf{w}_i) \big). 
        \end{align*}
        Similar to the proof of Claim~\ref{CLAIM:prob-standard-parallelepiped}, we have $K_{\mathrm{P}}(\mathbf{z}_1, \ldots, \mathbf{z}_{d}) = K_{d, \mathrm{P}}(\mathbf{z}_1, \ldots, \mathbf{z}_{d})$. 

        Recall that $\{\mathbf{b}_1, \ldots, \mathbf{b}_{d}\}$ is a basis of $\Lambda_{d}$ and $B = \left[\mathbf{b}_1, \ldots, \mathbf{b}_{d}\right] \in \mathbb{R}^{d\times d}$. 
        Let $W \coloneqq \left[\mathbf{w}_1, \ldots, \mathbf{w}_{d}\right] \in \mathbb{R}^{d\times d}$, and let $M \in \mathbb{R}^{d \times d}$ be the matrix such that $WM = B$. 
        It follows that $MBW = I$, and thus,  
        \begin{align}\label{equ:M-transformation-Lambda-d}
            MB \mathbf{w}_i = \mathbf{e}_{i}. 
        \end{align}
        Given an element $(\mathbf{y}_1, \ldots, \mathbf{y}_{d}) \in S$, define the map $\varphi \colon \mathbb{T}^{d \times n} \to \mathbb{T}^{d \times n}$ by
        \begin{align*}
        \varphi([\mathbf{y}_1, \ldots, \mathbf{y}_{d}]^t):=
            \left[ \psi(\mathbf{y}_1), \ldots, \psi(\mathbf{y}_d) \right]^t,
        \end{align*}
        where
        \begin{align*}
            \left[ \psi(\mathbf{y}_1), \ldots, \psi(\mathbf{y}_d) \right]
            \coloneqq \left[ \mathbf{y}_1, \ldots, \mathbf{y}_{d} \right] M \Mod{\mathbb{Z}^{n\times d}}. 
        \end{align*}
        Combining~\eqref{equ:M-transformation-Lambda-d} with definition~\eqref{equ:def-phi}, we obtain   
        \begin{align*}
            \phi_{\psi(\mathbf{y}_1), \ldots, \psi(\mathbf{y}_d)}(\mathbf{w}_i)
            & = \left[\psi(\mathbf{y}_1), \ldots, \psi(\mathbf{y}_d)\right] B^{-1} \mathbf{w}_{i} \\
            & = \left[\mathbf{y}_{1}, \ldots, \mathbf{y}_{d}\right] M B^{-1} \mathbf{w}_{i}
            = \left[\mathbf{y}_{1}, \ldots, \mathbf{y}_{d}\right] \mathbf{e}_{i} 
            = \mathbf{y}_{i}. 
        \end{align*}
        It follows that 
        \begin{align*}
            K_{\mathrm{P}}\left(\psi(\mathbf{y}_1), \ldots, \psi(\mathbf{y}_d)\right)
            = K_{\ast}\left(\mathbf{y}_1, \ldots, \mathbf{y}_{d}\right). 
        \end{align*}
        Since $(\mathbf{y}_1, \ldots, \mathbf{y}_{d}) \in S$, it follows from the definition of $S$ that 
        \begin{align*}
            \bar{\rho}\big( \Lambda + K_{\mathrm{P}}\left(\psi(\mathbf{y}_1), \ldots, \psi(\mathbf{y}_d)\right) \big)
            = \bar{\rho}\big( \Lambda + K_{\ast}\left(\mathbf{y}_1, \ldots, \mathbf{y}_{d}\right) \big)
            \le C_{\ref{LEMMA:cartesian-product-covering}} \delta^{2^d},  
        \end{align*}
        which implies that $\left(\psi(\mathbf{y}_1), \ldots, \psi(\mathbf{y}_d)\right) \in T$. 

        Since $\mathrm{P}$ is a fundamental parallelepiped of $\Lambda_{d}$, the matrix $M$ must be an integer matrix with determinant $\pm1$. 
        It follows from Lemma~\ref{LEMMA:measure-preserve} that the map $\varphi$ on $\mathbb{T}^{d\times n}$ is measure-preserving. 
        Therefore, 
        \begin{align*}
            \mathbb{P}\left[ E_{\mathrm{P}} \right] 
            = \mu(T) 
            \ge \mu(S)
            = \mathbb{P}\left[ E_{\mathrm{P}_{\ast}} \right] 
            \ge 1- \frac{1}{C_{\ref{LEMMA:point-type-upper-bound}} + 1}, 
        \end{align*}
        which proves Claim~\ref{CLAIM:prob-each-type}. 
    \end{proof}
    Fix a collection $(\mathbf{y}_1, \ldots, \mathbf{y}_{d} ) \subseteq \mathbb{T}^{n \times d}$ such that all of the events $\{E_{\mathrm{P}} \colon \mathrm{P} \in \mathcal{P}\}$ occur simultaneously. 
    For convenience, let $\tilde{\Lambda} \coloneqq \tilde{\Lambda}(\mathbf{y}_1, \ldots, \mathbf{y}_{d})$ and $\phi \coloneqq \phi_{\mathbf{y}_1, \ldots, \mathbf{y}_{d}}$.
    
    \hide{For every point $\mathbf{z} \in \mathbb{R}^d$, define \op{I am confused by this definition: what is the role of $z$ in $V_{\mathbf{z}}$? Can we just use $\mathbb{R}^{n}$ everywhere instead of $V_{\mathbf{z}}$?}  
    \begin{align*}
        V_{\mathbf{z}}
        \coloneqq \left\{ \mathbf{x} \colon (\mathbf{x}, \mathbf{z}) \in \mathbb{R}^{n+d} \right\}
        \cong \mathbb{R}^{n}. 
    \end{align*}
    }
    For every lattice point $\mathbf{z} \in \Lambda_d$, define 
    \begin{align*}
        \Lambda_{\mathbf{z}}
        \coloneqq \left\{ \mathbf{x} \colon (\mathbf{x}, \mathbf{z}) \in\tilde{\Lambda} \right\}. 
    \end{align*}
    Note that, by definition, for every $\mathbf{z} \in \Lambda_d$, the lattice $\Lambda_{\mathbf{z}}$ can be written as 
    \begin{align}\label{equ:Lambda-z}
        \Lambda_{\mathbf{z}}
        = \Lambda + \phi(\mathbf{z}).  
    \end{align}
    This means that $\Lambda_{\mathbf{z}}$ is simply a translation of the lattice $\Lambda$. 
    
    Fix a point $\mathbf{w} \in \mathbb{R}^{d}$ and fix a fundamental parallelepiped $\mathrm{P}_{\mathbf{w}}$ that is contained in the ball $B^{d}_{r}(\mathbf{w})$.
    Fix a point $\mathbf{s} \in V(\mathrm{P}_{\mathbf{w}})$ and let $\mathrm{P}_{\mathbf{w}}' \coloneqq \mathrm{P}_{\mathbf{w}} - \mathbf{s}$ be obtained from $\mathrm{P}_{\mathbf{w}}$ by translating by $-\mathbf{s}$. 
    Note that $\mathrm{P}_{\mathbf{w}}'$ is a fundamental parallelepiped contained in $B_{2r}^{d}$ and $\mathbf{0}$ is a vertex in $\mathrm{P}_{\mathbf{w}}'$, that is, $\mathrm{P}_{\mathbf{w}}' \in \mathcal{P}$. 
    
    Consider the following restriction of the set $\tilde{K}+ \tilde{\Lambda}$:
    \begin{align*}
        (\tilde{K}+ \tilde{\Lambda})\mid_{\mathbf{w}}
        \coloneqq \left\{\mathbf{x} \in \mathbb{R}^{n} \colon (\mathbf{x}, \mathbf{w}) \in \tilde{K}+ \tilde{\Lambda} \right\}.
    \end{align*}
    Since $\tilde{K} = K \times B_{r}^{d}$, a point $\mathbf{x} \in \mathbb{R}^{n}$ is covered by $(\tilde{K}+ \tilde{\Lambda})\mid_{\mathbf{w}}$ if there exists a point $\tilde{\mathbf{w}} \in \Lambda_{d} \cap B_{r}(\mathbf{w})$ such that $\mathbf{x} \in \Lambda_{\tilde{\mathbf{w}}} + K$. 
    In particular, the set $\bigcup_{\tilde{\mathbf{w}} \in V(\mathrm{P}_{\mathbf{w}})} \left( \Lambda_{\tilde{\mathbf{w}}} + K \right) \subseteq \mathbb{R}^{n}$ is covered by $(\tilde{K}+ \tilde{\Lambda})\mid_{\mathbf{w}}$.
    By~\eqref{equ:Lambda-z} and the linearity of $\phi$, we have 
    \begin{align*}
        \bigcup_{\tilde{\mathbf{w}} \in V(\mathrm{P}_{\mathbf{w}})} \left( \Lambda_{\tilde{\mathbf{w}}} + K \right)
        & = \bigcup_{\tilde{\mathbf{w}} \in V(\mathrm{P}_{\mathbf{w}})} \left(\Lambda+ \phi(\tilde{\mathbf{w}}) +K \right) \\
        & = \bigcup_{\tilde{\mathbf{w}} \in V(\mathrm{P}_{\mathbf{w}})} \left(\Lambda+ \phi(\tilde{\mathbf{w}} - \mathbf{s} + \mathbf{s}) +K \right) \\
        & = \phi(\mathbf{s}) + \Lambda +  \bigcup_{\tilde{\mathbf{w}} \in V(\mathrm{P}'_\mathbf{w})} \left( K+\phi(\tilde{\mathbf{w}}) \right)
        = \phi(\mathbf{s}) + \Lambda + K^{\mathrm{P}'_\mathbf{w}}.
    \end{align*}
    Since $\mathrm{P}_{\mathbf{w}}' \in \mathcal{P}$, the choice of vectors $\{\mathbf{y}_1, \ldots, \mathbf{y}_{d}\}$ guarantees that 
    \begin{align*}
        \bar{\rho}\big( \Lambda + K^{\mathrm{P}'_\mathbf{w}} \big)
        \le C_{\ref{LEMMA:cartesian-product-covering}} \delta^{2^d}. 
    \end{align*}
    Since translation (by $\phi(\mathbf{s})$) does not affect the density of uncovered points, we also have
    \begin{align*}
        \bar{\rho}\big( \textstyle\bigcup_{\tilde{\mathbf{w}} \in V(\mathrm{P}_{\mathbf{w}})} \left( \Lambda_{\tilde{\mathbf{w}}} + K \right) \big)
        \le C_{\ref{LEMMA:cartesian-product-covering}} \delta^{2^d}. 
    \end{align*}
    Since $\mathbf{w} \in \mathbb{R}^{d}$ was arbitrary, it follows that $\bar{\rho} \big(\tilde{\Lambda}+ \tilde{K}\big) \le C_{\ref{LEMMA:cartesian-product-covering}} \delta^{2^d}$. 
    This completes the proof of Lemma~\ref{LEMMA:cartesian-product-covering}. 
\end{proof}

\section{Concluding remarks}\label{SEC:remarks}

Rogers' original proof in~\cite{59R} is essentially the same as our proof of Theorem~\ref{THM:general-parallelepiped} in the special case $d=1$. We hope that the idea of using an iterative step where the dimension increases by more than $1$ will lead to further improvements (via Theorem~\ref{THM:general-parallelepiped} or some other estimates).

Unfortunately, we have little intuition about the optimal robust lattice covering density in dimensions $d\ge 4$. The presented constant $\beta$ in Theorem~\ref{THM:main-improve-Rogers} was the best one that came from our sporadic search for $d\le 3$. So the natural question motivated by Theorem~\ref{THM:general-parallelepiped} is the following. 
\begin{problem}\label{PROB:robust-covering-ratio}
    Determine $\tilde{\Theta}_{n}$. 
    In particular, what is the infimum of  
    \begin{align*}
        \frac{1}{n} \log_{2}\big(\tilde{\Theta}_{n}/\nu_{n}\big)?
    \end{align*}
\end{problem}

A lower bound $\tilde{\Theta}_{n} \ge {\nu_{n}}/{2^n}$ can be established via the following argument. 
Let $(\Lambda, B_{r}^{n})$ be a robust lattice covering of $\mathbb{R}^{n}$ with $\det(\Lambda) =1$. 
By definition, $B_{r}^{n}$ contains a fundamental parallelepiped $\mathrm{P}$.
It is not hard to show that the largest volume of a parallelepiped (not necessarily a $\Lambda$-parallelepiped) contained in $B_{r}^{n}$ is $\left(2r/\sqrt{n}\right)^{n}$, attained by an inscribed cube centred at the origin.
Since $\mathrm{P} \subseteq B_{r}^{n}$ is a parallelepiped with volume $\det(\Lambda) = 1$,  it follows that 
\begin{align*}
    \left(2r/\sqrt{n}\right)^{n} 
    \ge \vol(\mathrm{P})
    = |\det \Lambda|
    = 1, 
\end{align*}
which implies that $r \ge \sqrt{n}/2$. 
Therefore, we obtain the bound
\begin{align*}
    \tilde{\Theta}_{n}
    \ge \frac{\vol(B_{r}^{n})}{|\det(\Lambda)|}
    \ge \vol\left(B_{\sqrt{n}/2}^{n}\right)
    \ge \frac{1}{2^n} \vol\left(B_{\sqrt{n}}^{n}\right)
    = \frac{\nu_{n}}{2^n}.
\end{align*}

In particular, this yields $\frac{1}{n} \log_{2}\left(\tilde{\Theta}_{n}/\nu_{n}\right) \ge -1$, which means that the best possible value we can hope for $\gamma$ via an application of Theorem~\ref{THM:general-parallelepiped} as stated is at least $\frac{1}{2}\log_{2}(2\pi \mathrm{e}) - 1 = 1.0471...$\,.

\bibliographystyle{abbrv}
\bibliography{SphereC}
\end{document}